\newcommand{\Z}{\mathbb{Z}}
\newtheorem{theorem}{Theorem}[section]
\newtheorem{lemma}[theorem]{Lemma}
\newtheorem{proposition}[theorem]{Proposition}
\newtheorem{corollary}[theorem]{Corollary}
\newtheorem{remark}[theorem]{Remark}
\newtheorem*{example}{Example}
\DeclareMathOperator{\Image}{Im}
\DeclareMathOperator{\Ker}{Ker}
\DeclareMathOperator{\Coker}{Coker}
\DeclareMathOperator{\Hom}{Hom}
\DeclareMathOperator{\Ext}{Ext}
\DeclareMathOperator{\Gr}{Gr}
\newcommand{\ptwo}{P_2^{\sqrt{}}}
\newcommand{\polytwo}{P_2^{\sqrt{}}}
\newcommand{\polythree}{P_3^{\sqrt{}}}
\newcommand{\pthree}{P_3^{\sqrt{}}}
\newcommand{\paug}{I_{\sqrt{}}}
\newcommand{\ptensor}{P_2^{\sqrt{}}(G) \otimes_G P_2^{\sqrt{}}(G)}
\title{A formula for the second cohomology of two-step nilpotent groups}
\author[K. Dekimpe]{Karel Dekimpe}
\address{KU Leuven Kulak\\
E. Sabbelaan 53\\
8500 Kortrijk\\
Belgium}
\email{karel.dekimpe@kuleuven-kortrijk.be}
\email{sarah.wauters@kuleuven-kortrijk.be}
\author[M. Hartl]{Manfred Hartl}
\author[S. Wauters]{Sarah Wauters}\thanks{S. Wauters is supported by  a Ph.\ D.\ fellowship of the Research Foundation - Flanders (FWO) }\thanks{Research supported by the research council KU Leuven}
\address{Univ. Lille Nord de France\\ F-59000 Lille\\ France\newline  UVHC, LAMAV and FR CNRS 2956\\ F-59313 Valenciennes\\ France}
\email{manfred.hartl@univ-valenciennes.fr}
\begin{document}
\begin{abstract}
In \cite{hart96-1}, the notion of polynomial cocycles is used to give an expression for the second cohomology of $\mathcal{T}$-groups with coefficients in a torsion-free nilpotent module. We make this expression concrete in the case of a $\mathcal{T}$-group $G$ of nilpotency class $\leq 2$ and coefficients in a trivial $G$-module, using a Lie algebra $L^{\surd}(G)$ associated to the group. This approach allows us to construct explicit cocycles representing the elements of the second cohomology group. 
\end{abstract}
\maketitle
\section{Introduction}

In this article we study torsion-free, nilpotent, finitely generated groups, hereafter denoted $\mathcal{T}$-groups. 
If $G$ is a  $\mathcal{T}$-group of nilpotency class $c$, then $G$ fits in a short exact sequence of the form 
\[ 1\longrightarrow \sqrt{\gamma_c(G)} \longrightarrow G \longrightarrow G/\sqrt{\gamma_c(G)} \longrightarrow 1\]
where $\sqrt{\gamma_c(G)}\cong \Z^k $ (for some $k>0$) is the isolator of the last non vanishing term of the lower central series of $G$ and $G/\sqrt{\gamma_c(G)}$ is again a $\mathcal{T}$-group, but of class $c-1$. 
This suggests that one can construct all $c$--step nilpotent ${\mathcal T}$-groups as central extensions of 
${\mathcal T}$-groups of class $c-1$. In order to be able to classify  these extensions, one needs to study the
second cohomology of such groups $G$ with coefficients in a trivial module. 

\medskip

The second cohomology of $\mathcal{T}$-groups, or even more generally, polycyclic groups, has already been studied in several publications (\cite{cp90-1}, \cite{cp00-1}, \cite{de02-1}, \cite{hueb89-1},  \cite{ls87-1}, \ldots). The main emphasis thus far has been on algorithmic methods that give a description of the second cohomology group in terms of a presentation of the group. In this article, we will use a different approach to study the second cohomology of $\mathcal{T}$-groups of nilpotency class $\leq 2$. We are able to present a general, but still simple formula for $H^2(G,M)$ for all $\mathcal{T}$-groups of class 2 and all trivial $G$-modules $M$ in case $M$ is additively torsion-free and finitely generated. Moreover, we also find explicitly representing (polynomial) 2-cocycles for all elements of $H^2(G,M)$.

\medskip

To obtain our formula, we consider the graded Lie ring of $G$ which is associated to the descending sequence of isolators of the lower central series. In fact, we show that the first and second cohomology of the group and the associated graded Lie ring coincide; in fact, both of them coincide with the corresponding cohomology of the  Chevalley-Eilenberg complex divided by the fourth part of a certain canonical filtration.

\medskip

It is well known that every $\mathcal{T}$-group admits a Mal'cev basis with a polynomial group law, i.e., every $\mathcal{T}$-group is isomorphic to $\Z^n$ as a set, where the group law is given by polynomial (actually numerical) functions in the entries. This fact motivates the use of  the  polynomial Passi functors $P_n$ and $P_n^{\surd}$ as a tool  (\cite{hart91-1}, \cite{hart96-1}, \cite{hart98-1}); they encode polynomiality in an intrinsic way and render its study accessible to Lie algebra techniques, especially enveloping algebras, which is not visible from the classical viewpoint of polynomial functions. Therefore these polynomial constructions and related Lie theoretical methods also form the heart of this paper.

\medskip

We recall the necessary background and the construction of these polynomial functors in Section \ref{two}. The formula for $H^2(G,M)$ is obtained in the third section and finally in section 4 we obtain the representing cocycles.

%The outline of the article is as follows.
%%n Section \ref{three}, we recall in Theorem \ref{general_formula} a special case of Proposition 2.7 and Corollary 4.5 of \cite{hart96-1}, that give a general expression for the second cohomology of a two-step $\mathcal{T}$-group with coefficients in a nilpotent, torsion-free module. We then introduce a filtration on $\ptensor$ and study the map $\widetilde{\mu} : \ptensor \to P_3^{\surd}G$, induced by multiplication, on the filtration quotients (Proposition \ref{second} and Proposition \ref{third}). This allows us to prove Theorem \ref{cohom_two_step}, the main result of this article that gives a formula for the second cohomology in terms of the associated Lie ring.
%
%Finally, in Section \ref{four} we make the isomorphism concrete, i.e., we give explicit polynomial, representing cocycles for a given two-step $\mathcal{T}$-group and trivial torsion-free and finitely generated $G$-module (Theorem \ref{lemmax} and Theorem \ref{lemmay}). 

%Note that the second formula of Theorem \ref{cohom_two_step} already appeared in the second author's habilitation thesis \cite[Corollaire 3.3.7]{hart98-1}, although without proof. Section \ref{four} is completely new. 

\section{Definitions and pre-requisites}\label{two}
Take a group $G$, and let $\Z G$ be the group ring of $G$. Denote the augmentation ideal of $G$ by $IG$. There is a natural filtration 
\[\Z G \supseteq IG \supseteq I^2G \supseteq I^3G \supseteq \cdots\] of $\Z G$, consisting of the powers of the augmentation ideal. If $G$ is torsion-free, it is often more convenient to consider the (additive) isolators $\sqrt{I^kG}$ of the powers of $IG$, where by definition $\sqrt{I^kG}$ is the group consisting of the elements $x \in IG$ such that there exists an integer $n > 0$ with $nx \in I^kG$. We use the notation $I^k_{\surd}G=\sqrt{I^k G}$. These groups give an alternative filtration of $\Z G$. Now we can define the Passi functors by 
\[P_n(G)=IG / I^{n+1}G \ \ \ \mbox{and} \ \ \ P_n^{\surd}(G)= IG / I^{n+1}_{\surd}G.\] %(motivation, more about the structure)
Note that $P_n^{\surd}(G)$ is just $P_n(G)$ divided out by its (additive) torsion.  The second construction will be more convenient if the group is torsion-free. 
There are maps $p_n : G \to P_n(G)$ and $p_n^{\surd} : G \to P_n^{\surd}(G)$, induced by $\triangle : g \mapsto (g-1) \in IG$. If we equip the groups $P_n(G)$ and $P_n^{\surd}(G)$ with a $G$-module structure induced by multiplication at the left, the maps $p_n$ and $p_n^{\surd}$ become derivations, that are in fact universal in some sense (\cite{hart96-1}). We sometimes need another description of $I_{\surd}^k(G)$, given in the following lemma.

\begin{lemma}[Theorem 1.7 in \cite{hart96-1}]\label{description_surd}
The group $I_{\surd}^k G$ is generated by elements of the form $(g_1-1)(g_2-1) \cdots (g_n-1)$ with $g_i \in \sqrt{\gamma_{k_i}(G)}$ such that $\sum_{i=1}^n k_i \geq k$.
\end{lemma}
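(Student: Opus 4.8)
The plan is to prove the equality by two inclusions. Let $J_k$ denote the subgroup of $\Z G$ generated by all products $(g_1-1)\cdots(g_n-1)$ with $g_i\in\sqrt{\gamma_{k_i}(G)}$ and $\sum_{i=1}^n k_i\geq k$; the claim is that $J_k=I^k_{\surd}G$. I will use three elementary facts. First, $J_{k+1}\subseteq J_k$, since a generator of $J_{k+1}$ is a generator of $J_k$. Second, $J_k$ is a two-sided ideal of $\Z G$: multiplying a generator on either side by $g-1$ with $g\in G=\sqrt{\gamma_1(G)}$ produces a generator of weight one higher, and $J_k$ is additively generated by the $G$-translates of its generators. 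Third, $I^kG\subseteq J_k$, because $I^kG$ is generated as an ideal by the products $(g_1-1)\cdots(g_k-1)$ with all $g_i\in G$.

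For the inclusion $J_k\subseteq I^k_{\surd}G$ it suffices to show that $g\in\sqrt{\gamma_j(G)}$ forces $g-1\in\sqrt{I^jG}$: a product $(g_1-1)\cdots(g_n-1)$ then lies in $\sqrt{I^{k_1}G}\cdots\sqrt{I^{k_n}G}\subseteq\sqrt{I^{k_1+\cdots+k_n}G}\subseteq\sqrt{I^kG}$, the first inclusion because clearing denominators turns a product of elements into a product. The key assertion I would prove by induction on $j$ inside $\Q G$, using that $\sqrt{I^jG}=\Z G\cap I(\Q G)^j$. The case $j=1$ is trivial; for $j\geq2$, the inclusion $\sqrt{\gamma_j(G)}\subseteq\sqrt{\gamma_{j-1}(G)}$ and the induction hypothesis give $x:=g-1\in I(\Q G)^{j-1}$, hence $x^2\in I(\Q G)^{2(j-1)}\subseteq I(\Q G)^j$. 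If $g^m\in\gamma_j(G)$ then $g^m-1\in I^jG$ by the classical inclusion $\gamma_j(G)\subseteq 1+I^jG$, while $g^m-1=(1+x)^m-1\equiv mx\pmod{I(\Q G)^j}$; since $\Q G/I(\Q G)^j$ is a $\Q$-vector space, this forces $x\in I(\Q G)^j$.

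The substantial inclusion is $I^k_{\surd}G\subseteq J_k$, and for it one only needs that $\Z G/J_k$ is torsion-free: if $x\in\sqrt{I^kG}$ then $nx\in I^kG\subseteq J_k$ for some $n>0$, so $x\in J_k$. To prove torsion-freeness I would exhibit a $\Z$-basis of $\Z G/J_k$. Fix a Mal'cev basis $a_1,\dots,a_N$ of $G$ adapted to the isolated lower central series, so that for every $j$ the $a_i$ lying in $\sqrt{\gamma_j(G)}$ form a Mal'cev basis of $\sqrt{\gamma_j(G)}$; let $w_i$ be the largest $j$ with $a_i\in\sqrt{\gamma_j(G)}$, put $x_i:=a_i-1\in IG$, and assign the monomial $x_1^{e_1}\cdots x_N^{e_N}$ the weight $\sum_i e_iw_i$. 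Since $a_i\in\sqrt{\gamma_{w_i}(G)}$ we have $x_i^m\in J_{mw_i}$, so in $R:=\Z G/J_k$ each $\overline{x_i}$ is nilpotent and $1+\overline{x_i}$ is invertible, whence every $\overline{a_i}^{\,c}$ ($c\in\Z$) is a polynomial in $\overline{x_i}$. As the group elements $a_1^{c_1}\cdots a_N^{c_N}$ run over a $\Z$-basis of $\Z G$, expanding each of them shows that $R$ is $\Z$-spanned by the finitely many ordered monomials $\overline{x_1}^{e_1}\cdots\overline{x_N}^{e_N}$ of weight $<k$, together with $1$. For linear independence I pass to $\Q$: one has $J_k\otimes\Q=I(\Q G)^k$, and by Quillen's theorem the graded ring $\Gr(\Q G)$ for the $I$-adic filtration is the universal enveloping algebra of the graded Lie algebra $\bigoplus_j\bigl(\sqrt{\gamma_j(G)}/\sqrt{\gamma_{j+1}(G)}\bigr)\otimes\Q$, which is generated in degree $1$, with the classes of the $x_i$ of weight $j$ forming a homogeneous basis of its degree-$j$ component; so by Poincar\'e--Birkhoff--Witt the weight-$<k$ ordered monomials in the $\overline{x_i}$ form a $\Q$-basis of $I(\Q G)/I(\Q G)^k$. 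Hence the above spanning set of $R$ becomes $\Q$-linearly independent after tensoring with $\Q$, so $R$ is free of finite rank, in particular torsion-free.

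The genuine obstacle is the reverse inclusion and, inside it, the independence step: the $\Z$-spanning is essentially formal once one knows $J_k$ is an ideal with $x_i^m\in J_{mw_i}$, but ruling out relations requires the dimension count of $I(\Q G)/I(\Q G)^k$, i.e.\ a PBW/Quillen description of $\Gr(\Q G)$ with the grading matched to the weights coming from the isolated lower central series. An alternative worth trying is induction on the nilpotency class, passing to $G/\sqrt{\gamma_c(G)}$, but the ensuing comparison of the relevant filtrations on $\Z G$ and on $\Z[G/\sqrt{\gamma_c(G)}]$ appears more delicate than the direct argument sketched here.
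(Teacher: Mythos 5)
The paper does not actually prove this lemma: it is quoted as Theorem 1.7 of \cite{hart96-1} and used as a black box, so there is no in-text argument to compare yours against. Your blind proof is correct and essentially reconstructs the classical Jennings--Hall--Quillen route that underlies Hartl's result. The easy inclusion $J_k\subseteq I^k_{\surd}G$ is handled cleanly: the induction showing that $g\in\sqrt{\gamma_j(G)}$ forces $g-1\in \Z G\cap I(\Q G)^j=\sqrt{I^jG}$ is sound, resting only on $\gamma_j(G)\subseteq 1+I^jG$ and on $\Q G/I(\Q G)^j$ being a $\Q$-vector space. For the substantial inclusion, reducing to torsion-freeness of $\Z G/J_k$ is exactly the right move, and your basis argument goes through: the spanning step is a routine Mal'cev expansion once $J_k$ is known to be an ideal with $x_i^m\in J_{mw_i}$, and the independence step legitimately outsources the dimension count to Quillen's theorem identifying $\Gr(\Q G)$ with the enveloping algebra of $\bigoplus_j(\gamma_j(G)/\gamma_{j+1}(G))\otimes\Q$, plus Poincar\'e--Birkhoff--Witt and the standard passage from a homogeneous basis of the associated graded to a basis of the filtered quotient $\Q G/I(\Q G)^k$. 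Two small points worth making explicit in a final write-up: the identification $\sqrt{I^jG}=\Z G\cap I(\Q G)^j$ (clear denominators both ways), and the fact that $(\gamma_j/\gamma_{j+1})\otimes\Q\cong(\sqrt{\gamma_j(G)}/\sqrt{\gamma_{j+1}(G)})\otimes\Q$ compatibly with $g\mapsto g-1$, which is what lets you read off the weights of the $x_i$ from the isolated series. As a bonus, your $\Z$-basis of $\Z G/J_k$ simultaneously reproves the integral statement the paper also imports from Hartl, namely Theorem 2.2 that $\theta:U(L^{\surd}G)\to\Gr^{\surd}(\Z G)$ is an isomorphism, since it exhibits each $I^j_{\surd}G/I^{j+1}_{\surd}G$ as free on the weight-$j$ ordered monomials; so your approach buys a self-contained reduction of the integral theory to the rational one.
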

%universal derivation!
This means that $(I^k_{\surd} G)_{k \geq 1}$ is in fact the filtration associated to the $N_0$-sequence $(\sqrt{\gamma_k(G)})_{k \geq 1}$ of $G$ (see \cite{pass77-1} and \cite{pass79-1}).
It is clear that the product of $I^k_{\surd}G$ and $I^l_{\surd}G$ lies in $I^{k+l}_{\surd}G$. This means that we can study the filtration quotients $I_{\surd}^k G / I_{\surd}^{k+1}G$ globally, by defining the ring $\Gr^{\surd}(\Z G) = \bigoplus_{i=0}^{+\infty} I_{\surd}^i G / I_{\surd}^{i+1} G$. 

It is well-known that the group $L^{\surd}G=\bigoplus_{i=1}^{+\infty} \sqrt{\gamma_i(G)} \big/ \sqrt{\gamma_{i+1}(G)}$ can be equipped with a Lie bracket induced by the commutator map in $G$. Now define a graded map $\widetilde{\theta} : L^{\surd} G \to \Gr^{\surd} (\Z G)$, sending an element $g_i \sqrt{\gamma_{i+1}(G)}$ to $(g_i -1 )+ I_{\surd}^{i+1} G$ for $g_i \in \sqrt{\gamma_i(G)}$. The map is well-defined by the previous lemma, and one can check that $\widetilde{\theta} [x,y]=\widetilde{\theta}(x)\widetilde{\theta}(y)-\widetilde{\theta}(y)\widetilde{\theta}(x)$. This means that there is a corresponding graded map $\theta : U(L^{\surd}G) \to \Gr^{\surd} (\Z G)$, where the grading on the universal enveloping algebra is induced by the grading on the Lie ring $L^{\surd}G$. 
The following theorem characterizes $\Gr^{\surd} (\Z G)$. 
\begin{theorem}[Theorem 1.7 in \cite{hart96-1}]\label{U_is_Gr}
The map $\theta : U(L^{\surd}G) \to \Gr^{\surd} (\Z G)$ is an isomorphism of graded rings.
\end{theorem}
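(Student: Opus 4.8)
Since $\theta$ is a morphism of graded rings, it suffices to prove that for every $k\geq 0$ the component $\theta_k\colon U(L^{\surd}G)_k \to I_{\surd}^kG/I_{\surd}^{k+1}G$ is bijective. I would first record that both sides are free $\Z$-modules of finite rank. On the left, $L^{\surd}G$ is, as a $\Z$-module, finitely generated (as $G$ is a finitely generated nilpotent group), torsion-free (each quotient $\sqrt{\gamma_i(G)}/\sqrt{\gamma_{i+1}(G)}$ is torsion-free because $\sqrt{\gamma_{i+1}(G)}$ is isolated in $\sqrt{\gamma_i(G)}$) and concentrated in finitely many degrees; hence it is $\Z$-free, so by the Poincar\'e--Birkhoff--Witt theorem over $\Z$, $U(L^{\surd}G)$ is $\Z$-free on the ordered monomials in a homogeneous basis $y_1,\dots,y_n$ of $L^{\surd}G$, and each graded piece $U(L^{\surd}G)_k$ is free of finite rank with Poincar\'e series $\prod_{i=1}^n (1-t^{\deg y_i})^{-1}$. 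On the right, $I_{\surd}^kG/I_{\surd}^{k+1}G$ is finitely generated by Lemma~\ref{description_surd}, and it is torsion-free: if $nx\in I_{\surd}^{k+1}G$ with $x\in I_{\surd}^kG$ and $n>0$, then $mnx\in I^{k+1}G$ for some $m>0$, so $x\in I_{\surd}^{k+1}G$. As a surjection between free $\Z$-modules of equal finite rank is automatically an isomorphism, it remains to check that $\theta_k$ is onto and that the two ranks agree.

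Surjectivity follows at once from Lemma~\ref{description_surd}. That lemma says $I_{\surd}^kG$ is generated, as an abelian group, by products $(g_1-1)\cdots(g_m-1)$ with $g_l\in\sqrt{\gamma_{k_l}(G)}$ and $\sum_l k_l\geq k$; the summands with $\sum_l k_l>k$ vanish in $I_{\surd}^kG/I_{\surd}^{k+1}G$, so the quotient is generated by the classes of such products with $\sum_l k_l=k$. For such a product, the class of $(g_l-1)$ in $I_{\surd}^{k_l}G/I_{\surd}^{k_l+1}G$ is $\widetilde\theta\bigl(g_l\sqrt{\gamma_{k_l+1}(G)}\bigr)$, and since $\theta$ restricts to $\widetilde\theta$ on $L^{\surd}G$ and is multiplicative, the class of the product equals $\theta$ of the corresponding monomial in $U(L^{\surd}G)_k$. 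Hence $\theta_k$ is surjective.

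For the equality of ranks I would pass to $\Q$. First one checks that tensoring with $\Q$ collapses the isolator filtration onto the ordinary augmentation filtration: using Lemma~\ref{description_surd} and the nilpotency of $G$ one shows $I_{\surd}^kG\otimes\Q$ is carried isomorphically onto $I^k(\Q G)$ inside $IG\otimes\Q=I(\Q G)$, so that $\Gr^{\surd}(\Z G)\otimes\Q\cong\Gr(\Q G)$ as graded rings and in particular $\operatorname{rank}_\Z\bigl(I_{\surd}^kG/I_{\surd}^{k+1}G\bigr)=\dim_\Q\bigl(I^k(\Q G)/I^{k+1}(\Q G)\bigr)$. Then one invokes the classical identification, in characteristic $0$, of the associated graded of a group algebra with an enveloping algebra (Jennings for finitely generated nilpotent groups, Quillen in general): $\Gr(\Q G)\cong U\bigl(L^{\surd}G\otimes\Q\bigr)$, the rational Lie algebra $\bigoplus_i(\gamma_i(G)/\gamma_{i+1}(G))\otimes\Q$ of the lower central series being the same as $L^{\surd}G\otimes\Q$ because rationalising kills torsion. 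Comparing with the PBW description above, $\dim_\Q U(L^{\surd}G\otimes\Q)_k=\operatorname{rank}_\Z U(L^{\surd}G)_k$, so the ranks match and the theorem follows.

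I expect the rationalisation step to be the main obstacle: one needs to verify carefully, using Lemma~\ref{description_surd} and nilpotency, that $I_{\surd}^kG\otimes\Q$ is exactly $I^k(\Q G)$ (the inclusion ``$\subseteq$'' is clear, the reverse needs an argument), and one needs the characteristic-zero enveloping-algebra description of $\Gr(\Q G)$ in the right generality. An alternative, more self-contained route would avoid $\Q$ altogether: fix a Mal'cev basis $b_1,\dots,b_n$ of $G$ adapted to the isolators of the lower central series, prove using the polynomiality of the group law that the classes of the monomials $(b_1-1)^{a_1}\cdots(b_n-1)^{a_n}$ form a $\Z$-basis of $\Gr^{\surd}(\Z G)$ in each degree, and note that $\theta$ maps the PBW basis of $U(L^{\surd}G)$ onto exactly this basis; this trades Quillen's theorem for an explicit computation with Mal'cev coordinates.
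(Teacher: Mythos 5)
The paper does not prove this statement at all: it is imported verbatim as Theorem 1.7 of \cite{hart96-1}, so any proof you give is necessarily a different route from the paper's. Your reconstruction is essentially the standard Jennings--Hall--Quillen argument and its overall structure is sound: (a) surjectivity of each $\theta_k$ from Lemma \ref{description_surd} plus multiplicativity of $\theta$, (b) freeness of both graded pieces (PBW over $\Z$ applies because $L^{\surd}G$ is free abelian), and (c) equality of ranks by rationalising, using that $I_{\surd}^kG\otimes\Q = I^k(\Q G)$ and that $\Gr(\Q G)\cong U(\gr(G)\otimes\Q)\cong U(L^{\surd}G\otimes\Q)$. Two small points to tighten. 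First, the finite generation of $I_{\surd}^kG/I_{\surd}^{k+1}G$ does not follow directly from Lemma \ref{description_surd} (whose generating set is a priori infinite); it follows from the surjectivity of $\theta_k$ established in your second paragraph, so the logical order should be reversed. Second, for the rank count you need not just the abstract Quillen/Jennings isomorphism but its naturality: the isomorphism $\Gr(\Q G)\cong U(\gr(G)\otimes\Q)$ is induced by $g\mapsto (g-1)$, i.e.\ it \emph{is} $\theta\otimes\Q$ under the identification $L^{\surd}G\otimes\Q\cong\gr(G)\otimes\Q$ (which is a Lie isomorphism because the comparison map $\gamma_i(G)/\gamma_{i+1}(G)\to\sqrt{\gamma_i(G)}/\sqrt{\gamma_{i+1}(G)}$ has torsion kernel and cokernel and commutes with the commutator brackets). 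With those naturality checks made explicit, the argument is complete; your alternative Mal'cev-basis route would make it self-contained at the cost of the explicit computation you describe.
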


Denote by $S(A)$ the symmetric ring of an abelian group $A$, and by $S^n(A)$ its degree $n$ part. If $G$ is nilpotent and finitely generated, the groups $L_i^{\surd}G =\sqrt{\gamma_i(G)} \big/ \sqrt{\gamma_{i+1}(G)}$ are free abelian, and we can choose ordered bases for $L_1^{\surd}G$, $L_2^{\surd}G$, $\ldots$ Concatenating these bases, we get an ordered basis $X$ of $L^{\surd}G$. The Poincar\'{e}-Birkhoff-Witt theorem now gives us a basis of $U(L^{\surd}G)$ consisting of non-descending sequences in $X$. It is clear that this will also yield a basis of the degree $n$ parts $U_n(L^{\surd}G)$. Consider for example the group $U_2 (L^{\surd}G)$. The basis elements are either a basis element of $L_2^{\surd}G$ or a product of two basis elements $x_i$ and $x_j$ of $L_1^{\surd}G$ in the given order. Furthermore, since we work in the enveloping algebra, $x_i x_j - x_j x_i = [x_i,x_j] \in L_2^{\surd}G$. It follows directly that there is a natural exact sequence 
\begin{equation}\label{equation}\xymatrix{0 \ar[r] & L_2^{\surd}G \ar[r] & U_2(L^{\surd}G) \ar[r] & S^2(L_1^{\surd}G) \ar[r] & 0.}\end{equation} We can use the same arguments to find decompositions of $U_n(L^{\surd}G)$ for higher $n$. 

\section{Cohomology of two-step nilpotent groups} \label{three}
%put this somewhere else
The next theorem follows from Proposition 2.7 and Corollary 4.5 in \cite{hart96-1}, and forms the heart of the matter. 
\begin{theorem}\label{general_formula}
Let $G$ be a $(n-1)$-step nilpotent $\mathcal{T}$-group and let $M$ be a trivial $G$-module that is torsion-free and finitely generated as an abelian group. Then the $G$-equivariant map $[g|h] \mapsto p^{\surd}_{n-1}(g) \otimes p^{\surd}_{n-1}(h)$ on the normalized bar resolution induces an isomorphism
\[H^2(G,M)\cong \frac{\Hom_{\Z}(P_{n-1}^{\sqrt{}}(G)\otimes_G P_{n-1}^{\sqrt{}}(G), M)}{\widetilde{\mu}^{*}\big( \Hom_{\Z}(P_{n}^{\sqrt{}}(G),M)\big)},\] where $\widetilde{\mu} : P_{n-1}^{\sqrt{}}(G)\otimes_G P_{n-1}^{\sqrt{}}(G) \to P_{n}^{\sqrt{}}(G)$ is induced by multiplication in $IG$.
\end{theorem}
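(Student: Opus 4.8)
The plan is to obtain the formula by combining Proposition~2.7 and Corollary~4.5 of \cite{hart96-1}, the concrete content being to realize the resulting isomorphism through the explicit cochain map of the statement. Write $p:=p_{n-1}^{\surd}$ and, for an element $\mu\in\Hom_{\Z}\!\big(P_{n-1}^{\surd}(G)\otimes_G P_{n-1}^{\surd}(G),M\big)$ (which one may think of as a diagonally $G$-invariant biadditive form on $P_{n-1}^{\surd}(G)$ with values in $M$), let $c_\mu\colon G\times G\to M$ be the normalized cochain $(g,h)\mapsto\mu\big(p(g)\otimes p(h)\big)$; this is exactly the cochain induced by the $G$-equivariant map $[g|h]\mapsto p(g)\otimes p(h)$ on the bar resolution. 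Three things must then be checked: (a)~each $c_\mu$ is a $2$-cocycle, so that $\mu\mapsto[c_\mu]$ is a well-defined homomorphism $\Hom_{\Z}\!\big(P_{n-1}^{\surd}(G)\otimes_G P_{n-1}^{\surd}(G),M\big)\to H^2(G,M)$; (b)~this homomorphism is surjective; (c)~its kernel is $\widetilde{\mu}^{*}\big(\Hom_{\Z}(P_n^{\surd}(G),M)\big)$.

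For (a) I would expand $\delta c_\mu(g,h,k)$ using the derivation identity $p(ab)=p(a)+a\,p(b)$; after the evident cancellations it collapses to $\mu\big(p(g)\otimes\overline{(h-1)(k-1)}\big)-\mu\big(\overline{(g-1)(h-1)}\otimes p(k)\big)$, which vanishes using the diagonal $G$-invariance of $\mu$, the torsion-freeness of $M$, and the hypothesis that $G$ has class exactly $n-1$ — the class bound being what controls, via Lemma~\ref{description_surd} and the description \eqref{equation} of $U_2(L^{\surd}G)$, which triple products $\overline{(g-1)(h-1)(k-1)}$ survive in $P_{n-1}^{\surd}(G)$ and how $G$ acts on the degree-$\ge 2$ part. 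For one inclusion in (c) the key computation is that, for $\lambda\in\Hom_{\Z}(P_n^{\surd}(G),M)$, the polynomial $1$-cochain $\lambda\circ p_n^{\surd}$ has coboundary $(g,h)\mapsto\lambda\big(p_n^{\surd}(h)-p_n^{\surd}(gh)+p_n^{\surd}(g)\big)=-\lambda\big(\overline{(g-1)(h-1)}\big)=-(\widetilde{\mu}^{*}\lambda)\big(p(g)\otimes p(h)\big)=-c_{\widetilde{\mu}^{*}\lambda}(g,h)$, so that everything in the image of $\widetilde{\mu}^{*}$ becomes a coboundary; the reverse inclusion — that $c_\mu$ being a coboundary already forces $\mu$ to lie in the image of $\widetilde{\mu}^{*}$ — I would extract from \cite{hart96-1}.

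Part (b), that every class in $H^2(G,M)$ is represented by some $c_\mu$, is where the structure theory enters and where Proposition~2.7 of \cite{hart96-1} does the work: one replaces an arbitrary $2$-cocycle by a polynomial one of bidegree $\le(n-1,n-1)$, using that a $\mathcal{T}$-group has a Mal'cev basis with polynomial multiplication and that a central extension of $G$ by a torsion-free abelian group is therefore again polynomial (note such an extension is automatically torsion-free, hence again a $\mathcal{T}$-group), and then one trims that polynomial cocycle, modulo coboundaries, to the diagonally $G$-invariant biadditive form described by an element of $\Hom_{\Z}\!\big(P_{n-1}^{\surd}(G)\otimes_G P_{n-1}^{\surd}(G),M\big)$; Corollary~4.5 packages this together with the remaining inclusion in (c). I expect the genuine obstacle to be the cocycle verification in (a): it is the one step where the nilpotency class is used essentially, and a priori the triple products $\overline{(g-1)(h-1)(k-1)}$ threaten the cancellation, so the argument must exploit both the $G$-invariance of $\mu$ and the torsion-freeness of $M$ to kill the leftover terms; a secondary point is the bookkeeping needed to confirm that the abstract isomorphism furnished by \cite{hart96-1} is precisely the one induced by this explicit bar-cochain formula.
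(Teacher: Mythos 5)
Your proposal follows the same route as the paper, which in fact offers no proof of this theorem at all beyond the sentence preceding it: the statement is imported wholesale from Proposition~2.7 and Corollary~4.5 of \cite{hart96-1}. You correctly delegate the two substantive points --- surjectivity of $\mu\mapsto[c_\mu]$ and the reverse inclusion in the kernel identification --- to those results, and your computation in (c) that $\delta(\lambda\circ p_n^{\surd})=-c_{\widetilde{\mu}^{*}\lambda}$ is correct. So there is nothing in the paper to compare your elaboration against except the citation itself.

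The one step you argue by the wrong mechanism is (a). Your formal expansion is right: after cancellation, $\delta c_\mu(g,h,k)=\mu\big(p(g)\otimes\overline{(h-1)(k-1)}\big)-\mu\big(\overline{(g-1)(h-1)}\otimes p(k)\big)$. But this difference does not vanish because of the torsion-freeness of $M$, the nilpotency class, or any analysis of which triple products survive in $P_{n-1}^{\surd}(G)$; elements such as $\overline{(g-1)(h-1)}\otimes p(k)$ are in general nonzero in the tensor product (the subgroup $F_3/F_4$ studied in Section~3 is built from exactly such elements), so an argument that tries to kill the two terms separately cannot work. The point is that the two terms are \emph{equal}. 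The tensor product $\otimes_G$ here must be read as the balanced product with respect to multiplication in $\Z G$ (right action on the first factor, left action on the second), so that $xa\otimes y=x\otimes ay$ for $a\in\Z G$: this is forced by the requirement that $\widetilde{\mu}$, induced by multiplication in $IG$, factor through $\otimes_G$ (a diagonal-coinvariants reading would require $(gx)(gy)\equiv xy$, which fails), and it is precisely the relation the paper itself uses in the proof of Lemma~\ref{torsion}. Taking $a=h-1$ gives $\overline{(g-1)(h-1)}\otimes p(k)=p(g)\otimes\overline{(h-1)(k-1)}$ on the nose, so $c_\mu$ is a cocycle for purely formal reasons, with no hypotheses needed. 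The class bound and the torsion-freeness of $M$ enter only in the parts you (correctly) extract from \cite{hart96-1}.
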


Throughout the rest of this section, $G$ is a two-step $\mathcal{T}$-group and $M$ is a trivial $G$-module that is torsion-free and finitely generated as an abelian group. We write $L_1 = L_1^{\surd}G = G \big/ \sqrt{\gamma_2(G)}$, $L_2= L_2^{\surd}G = \sqrt{\gamma_2(G)}$ and $U_k = U_k(L^{\sqrt{}}G)$. Furthermore, we denote the Lie bracket in $L^{\surd}G$ by $c : L_1 \wedge L_1 \rightarrow L_2$. In this section, we prove the following result.
\begin{theorem}\label{cohom_two_step}
If $G$ is a two-step $\mathcal{T}$-group and $M$ is a trivial $G$-module that is torsion-free and finitely generated as an abelian group, then 
\[H^2(G,M) \cong \Coker c^* \oplus \Hom_{\Z} (L_1 \otimes L_2 \big/ S, M),\] where $S$ is the subgroup of $L_1 \otimes L_2$ generated by all elements $x \otimes c(y \wedge z) + y \otimes c(z \wedge x) + z \otimes c(x \wedge y)$  with $x, \, y, \, z \in L_1$ and $c^*:\Hom_{\Z}(L_2,M) \to \Hom_{\Z}(L_1 \wedge L_1,M)$ is pre-composition by $c$. %If $M$ is also finitely generated, 
The statement is equivalent to \[H^2(G,M) \cong \Hom_{\Z} (\Ker c , M) \oplus \Hom_{\Z} (L_1 \otimes L_2 \big/ S, M) \oplus \Ext_{\Z} (\Coker c, M).\] 
\end{theorem}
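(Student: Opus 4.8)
The plan is to set $n=3$ in Theorem~\ref{general_formula}, so that
\[H^2(G,M)\cong\Coker\big(\widetilde\mu^{*}\colon\Hom_{\Z}(P_3^{\surd}(G),M)\longrightarrow\Hom_{\Z}(P_2^{\surd}(G)\otimes_G P_2^{\surd}(G),M)\big),\]
and to make everything explicit through the identification $\Gr^{\surd}(\Z G)\cong U(L^{\surd}G)$ of Theorem~\ref{U_is_Gr}. Since $L^{\surd}G=L_1\oplus L_2$, the Poincar\'e--Birkhoff--Witt theorem gives $U_2\cong S^2L_1\oplus L_2$ and $U_3\cong S^3L_1\oplus(L_1\otimes L_2)$ as graded abelian groups, and $P_3^{\surd}(G)=IG/I_{\surd}^4G$ is free abelian, being torsion-free and finitely generated. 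First I would compute $\Image\widetilde\mu$, the image of $I^2G$ in $IG/I_{\surd}^4G$: in weight $2$ it is the image of the multiplication $L_1\otimes L_1\to U_2$, equal to $S^2L_1\oplus\Image c$ because $x\otimes y-y\otimes x\mapsto[x,y]=c(x\wedge y)$; in weight $3$ it is all of $U_3$, since $I^2G\cap I_{\surd}^3G$ already contains every product $(g-1)(k-1)$ with $k\in\sqrt{\gamma_2(G)}$ (Lemma~\ref{description_surd}), and these map onto the summand $L_1\otimes L_2$ of $U_3$. Hence $\Image\widetilde\mu$ is free abelian and $P_3^{\surd}(G)/\Image\widetilde\mu\cong L_1\oplus\Coker c$, where $\Coker c$ is the finite torsion subgroup of $G^{\mathrm{ab}}$.

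From the short exact sequences $0\to\ker\widetilde\mu\to P_2^{\surd}(G)\otimes_G P_2^{\surd}(G)\to\Image\widetilde\mu\to0$ and $0\to\Image\widetilde\mu\to P_3^{\surd}(G)\to L_1\oplus\Coker c\to0$, applying $\Hom_{\Z}(-,M)$ and using that $P_3^{\surd}(G)$, $\Image\widetilde\mu$, $L_1$ and $M$ are all free abelian (which kills the intervening $\Ext^1$-groups and splits the resulting extension) gives
\[H^2(G,M)\cong\Hom_{\Z}(\ker\widetilde\mu,M)\oplus\Ext_{\Z}(\Coker c,M).\]
Thus it remains to identify $\ker\widetilde\mu$ modulo torsion, and this is the step I expect to be the real obstacle.

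Here one computes $P_2^{\surd}(G)\otimes_G P_2^{\surd}(G)$ from $0\to U_2\to P_2^{\surd}(G)\to L_1\to0$ using right-exactness of the tensor product — the $\mathrm{Tor}_1^{\Z G}$-term, built out of $H_1(G,\,\cdot\,)$, is where the bookkeeping concentrates — together with the balancing relation $a(g-1)\otimes b=a\otimes(g-1)b$ and PBW normal forms. In weight $2$, $\ker\widetilde\mu$ is the kernel of $L_1\otimes L_1\to U_2$, which by the computation above is exactly $\Ker c$ (antisymmetrically embedded in $L_1\otimes L_1$). In weight $3$, two-step-ness forces $[f,x]=0$, so $\widetilde\mu$ carries both $\widetilde x\otimes\iota(f)$ and $\iota(f)\otimes\widetilde x$ — for $x\in L_1$, $f\in L_2$, with $\iota\colon L_2\hookrightarrow U_2\subseteq P_2^{\surd}(G)$ — to the same element $x\otimes f\in L_1\otimes L_2\subseteq U_3$; hence $\widetilde x\otimes\iota(f)-\iota(f)\otimes\widetilde x\in\ker\widetilde\mu$, which defines a homomorphism $L_1\otimes L_2\to\ker\widetilde\mu$. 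Rewriting $\widetilde x\otimes\iota(c(y\wedge z))=\overline{xy}\otimes\widetilde z-\overline{xz}\otimes\widetilde y$ by the balancing relation, the cyclic sum $\sum_{\mathrm{cyc}}\widetilde x\otimes\iota(c(y\wedge z))$ collapses to $\sum_{\mathrm{cyc}}\iota(c(y\wedge z))\otimes\widetilde x$, which is precisely what annihilates the generators of $S$; so the homomorphism factors through $L_1\otimes L_2/S$, and the remaining (and most delicate) point is that it is an isomorphism onto the weight-$3$ part of $\ker\widetilde\mu$ up to torsion. The torsion of $P_2^{\surd}(G)\otimes_G P_2^{\surd}(G)$ comes from $\Coker c$ and is therefore finite, hence invisible to $\Hom_{\Z}(-,M)$; a rank count against $\mathrm{rk}\big(P_2^{\surd}(G)\otimes_G P_2^{\surd}(G)\big)-\mathrm{rk}\,\Image\widetilde\mu$ provides the consistency check.

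It remains to see that the two displayed formulas in the theorem are equivalent, which is pure homological algebra: splitting $0\to\Ker c\to L_1\wedge L_1\xrightarrow{c}L_2\to\Coker c\to0$ into $0\to\Ker c\to L_1\wedge L_1\to\Image c\to0$ and $0\to\Image c\to L_2\to\Coker c\to0$, and applying $\Hom_{\Z}(-,M)$, the freeness of $\Ker c$, $\Image c$, $L_1\wedge L_1$, $L_2$ and $M$ forces $\Coker c^{*}\cong\Hom_{\Z}(\Ker c,M)\oplus\Ext_{\Z}(\Coker c,M)$.
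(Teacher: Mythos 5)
Your overall strategy is the same as the paper's: specialize Theorem~\ref{general_formula} to $n=3$, transport everything to $U(L^{\surd}G)$ via Theorem~\ref{U_is_Gr}, and analyse $\widetilde\mu$ weight by weight. The parts you carry out are correct: the image computation (weight $2$ gives $S^2L_1\oplus\Image c$, weight $3$ gives all of $U_3$ because $I^3_{\surd}G\subseteq I^2G$ for a two-step group), the resulting $P_3^{\surd}(G)/\Image\widetilde\mu\cong L_1\oplus\Coker c$, the dualization yielding $H^2(G,M)\cong\Hom_{\Z}(\ker\widetilde\mu,M)\oplus\Ext_{\Z}(\Coker c,M)$, the verification that the cyclic sums generating $S$ die under $x\otimes f\mapsto \widetilde x\otimes\iota(f)-\iota(f)\otimes\widetilde x$, and the equivalence of the two displayed formulas. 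Your packaging differs from the paper's (you compute $\Ker$ and $\Image$ of $\widetilde\mu$ and land on the second formula first; the paper works with cokernels at the $\Hom$ level, proves $\Coker\mu_2^*\cong\Coker c^*$ and splits off $\Hom_{\Z}(L_1\otimes L_2/S,M)$ by the snake lemma), but this is a reorganization, not a different argument.

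The genuine gap is exactly where you flag it: the identification of the weight-$3$ part of $\ker\widetilde\mu$ with $L_1\otimes L_2/S$. You construct a map $L_1\otimes L_2/S\to\ker\widetilde\mu$, but the theorem needs two surjectivity-type facts that you do not prove: (i) that this map hits \emph{everything} in the weight-$3$ kernel (a priori there are further kernel elements coming from the relations $a\otimes(g-1)b=a(g-1)\otimes b$ among the generators $p_2^{\surd}(x_i)\otimes p_2^{\surd}(x_j)p_2^{\surd}(x_k)$), and (ii) that $S$ is the \emph{entire} kernel of $L_1\otimes L_2\to F_3/F_4$, not just contained in it. In the paper these are Lemma~\ref{better_approximation}, Lemma~\ref{kernels} and Proposition~\ref{third}; the substance is the free-Lie-ring/PBW argument showing that the kernel of the three-fold commutator map $b_3$ is precisely the image of the Jacobi map $J$, which is where $S$ comes from. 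Your proposed ``rank count'' cannot close this gap for two reasons. First, computing $\mathrm{rk}\big(P_2^{\surd}(G)\otimes_G P_2^{\surd}(G)\big)$ already requires the presentation of $F_3/F_4$ that you are trying to avoid (this is the content of Lemmas~\ref{quotients}, \ref{approximations} and \ref{torsion}). Second, and more fundamentally, $L_1\otimes L_2/S$ is in general \emph{not} torsion-free --- in the paper's example it is entirely torsion --- and an isomorphism ``up to torsion'' does not induce an isomorphism after applying $\Hom_{\Z}(-,M)$ (a finite-index inclusion $A\hookrightarrow B$ of free abelian groups has $\Hom_{\Z}(B,M)\to\Hom_{\Z}(A,M)$ injective but typically not surjective). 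You need the exact sequence $0\to L_1\otimes L_2/S\to F_3/F_4\to U_3\to 0$ on the nose, split by freeness of $U_3$, and that requires the paper's Lemma~\ref{kernels}.
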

Observe that if $M = \Z$, $\Ext_{\Z} (\Coker c , \Z) \cong \Coker c$ since $\Coker c $ is finite. The equivalence of both statements follows from the universal coefficient theorem applied to the complex consisting of one non-trivial map $c : L_1 \wedge L_1 \to L_2$.

\begin{corollary}\label{corol} Using the notations of above, we consider 
the complex \begin{equation}\label{complex}\xymatrix{\Lambda^3 L_1 \ar[r] & (L_1 \otimes L_2) \oplus \Lambda^2 L_1 \ar[r] & L_1 \oplus L_2 \ar[r]^-{0} & \Z ,}\end{equation} where the first map is given by $\left(\begin{array}{c}0 \\ (\mathbb{1} \otimes c) \circ J \end{array}\right)$ and the second map by  $\left(\begin{array}{cc}0 & 0 \\ 0 & c \end{array}\right)$. If we apply the functor $\Hom_{\Z} (-,M)$ and take the first and second cohomology group of the resulting complex, we get the first and second cohomology group of $G$.
\end{corollary}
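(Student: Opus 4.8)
The plan is to apply $\Hom_\Z(-,M)$ to the complex \eqref{complex}, write out the two resulting cochain differentials, and read off the cohomology in degrees $1$ and $2$, comparing the answer with the standard value of $H^1(G,M)$ and with Theorem \ref{cohom_two_step} for $H^2(G,M)$. Denote by $d^1\colon\Hom_\Z(L_1\oplus L_2,M)\to\Hom_\Z((L_1\otimes L_2)\oplus\Lambda^2 L_1,M)$ and $d^2\colon\Hom_\Z((L_1\otimes L_2)\oplus\Lambda^2 L_1,M)\to\Hom_\Z(\Lambda^3 L_1,M)$ the maps induced by the two arrows of \eqref{complex}. Since the last arrow of \eqref{complex} is zero, the induced map $\Hom_\Z(\Z,M)\to\Hom_\Z(L_1\oplus L_2,M)$ is zero too; hence the first cohomology of the resulting cochain complex is $\Ker d^1$ and the second is $\Ker d^2/\Image d^1$.

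The first step is the degree-$1$ computation. From the matrix of the second arrow, $d^1$ carries $\varphi=(\varphi_1,\varphi_2)\in\Hom_\Z(L_1,M)\oplus\Hom_\Z(L_2,M)$ to $(0,\varphi_2\circ c)=(0,c^{*}\varphi_2)$; thus $\Ker d^1=\Hom_\Z(L_1,M)\oplus\Ker c^{*}$ and $\Image d^1=0\oplus\Image c^{*}$. Here $\Ker c^{*}=\Hom_\Z(\Coker c,M)$, and since $\Coker c$ is finite (indeed $\Coker c\cong\sqrt{\gamma_2(G)}/\gamma_2(G)$) while $M$ is torsion-free, $\Ker c^{*}=0$. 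So the first cohomology of the complex is $\Hom_\Z(L_1,M)$. On the other hand, for a trivial module one has $H^1(G,M)=\Hom(G,M)=\Hom_\Z(G/[G,G],M)$, which, as $M$ is torsion-free, equals $\Hom_\Z(L_1,M)$ because $L_1=G/\sqrt{\gamma_2(G)}$ is the quotient of $G^{\mathrm{ab}}$ by its torsion subgroup. Hence the first cohomology of the complex agrees with $H^1(G,M)$.

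The second step is the degree-$2$ computation. By the formula for $J$, the composite $(\mathbb{1}\otimes c)\circ J$ sends $x\wedge y\wedge z$ to $x\otimes c(y\wedge z)+y\otimes c(z\wedge x)+z\otimes c(x\wedge y)$, so its image is exactly the subgroup $S\subseteq L_1\otimes L_2$ of Theorem \ref{cohom_two_step}; since the first arrow of \eqref{complex} is zero in its $\Lambda^2 L_1$-component, a pair $\psi=(\psi_1,\psi_2)\in\Hom_\Z(L_1\otimes L_2,M)\oplus\Hom_\Z(\Lambda^2 L_1,M)$ lies in $\Ker d^2$ precisely when $\psi_1$ annihilates $S$. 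Therefore $\Ker d^2=\Hom_\Z(L_1\otimes L_2/S,M)\oplus\Hom_\Z(\Lambda^2 L_1,M)$, and dividing out $\Image d^1=0\oplus\Image c^{*}$ yields
\[\Hom_\Z\!\big(L_1\otimes L_2/S,\,M\big)\ \oplus\ \Coker c^{*}\]
for the second cohomology of the complex. This is precisely the right-hand side of Theorem \ref{cohom_two_step}, i.e.\ $H^2(G,M)$, which finishes the proof.

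I do not expect a real obstacle here: given Theorem \ref{cohom_two_step}, the corollary is a rearrangement of it together with the elementary computation of $H^1$. The only step calling for a little care is showing the superfluous summand $\Ker c^{*}$ disappears in degree $1$, where the finiteness of $\Coker c$ and the torsion-freeness of $M$ are used; identifying the image of $(\mathbb{1}\otimes c)\circ J$ with $S$ and unwinding the two matrices is routine.
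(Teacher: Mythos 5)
Your proof is correct and follows the same route as the paper: the paper's (one-sentence) argument likewise reads off degree $1$ from the injectivity of $c^{*}$ together with $H^1(G,M)\cong\Hom_{\Z}(L_1,M)$, and reduces degree $2$ to Theorem \ref{cohom_two_step}. You merely spell out the details the paper leaves implicit, in particular supplying the reason $c^{*}$ is injective (finiteness of $\Coker c$ against torsion-free $M$) and identifying $\Image\big((\mathbb{1}\otimes c)\circ J\big)$ with $S$.
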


For the first cohomology this is immediate from the fact that $c^*$ is injective and $H^1(G,M) \cong \Hom_{\Z}(L_1,M)$, while for the second cohomology it follows from the theorem.
The corollary has the following interesting consequence.

\begin{proposition}
Let $G$ be a ${\mathcal T}$-group of class $\leq 2$ and let $L^\surd G$ be the associated graded Lie ring.
Let $M$ be a torsion-free, finitely generated abelian group, viewed both as a trivial $G$--module and as a 
trivial $L^\surd G$--module. 
Then 
\[ H^i(G,M)\cong H^i(L^\surd G,M) \mbox{ for }i\leq 2.\]
\end{proposition}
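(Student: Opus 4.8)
The plan is to prove $H^i(G,M)\cong H^i(L^\surd G,M)$ for $i\leq 2$ by showing that both sides are computed by the same complex, namely the one appearing in Corollary \ref{corol}. For the group side, Corollary \ref{corol} already provides this: applying $\Hom_\Z(-,M)$ to the complex \eqref{complex} and taking $H^1$ and $H^2$ yields $H^1(G,M)$ and $H^2(G,M)$. So the real content is to establish the analogous statement for the Lie-ring cohomology $H^i(L^\surd G, M)$, where $L^\surd G = L_1 \oplus L_2$ with bracket $c:L_1\wedge L_1\to L_2$ (and all other brackets zero, since $G$ is of class $\leq 2$, so $L_2$ is central and $[L_2,L_2]=0$).

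First I would write down the Chevalley--Eilenberg complex for $L^\surd G$ with trivial coefficients in $M$. In low degrees this is $M \to \Hom_\Z(L^\surd G, M) \xrightarrow{d^1} \Hom_\Z(\Lambda^2 L^\surd G, M) \xrightarrow{d^2} \Hom_\Z(\Lambda^3 L^\surd G, M)$, where for a trivial module the differential is dual to the bracket-induced map $\Lambda^{k+1}L^\surd G \to \Lambda^k L^\surd G$ given (up to sign) by $x_0\wedge\cdots\wedge x_k \mapsto \sum_{i<j}(-1)^{i+j}[x_i,x_j]\wedge x_0\wedge\cdots\widehat{x_i}\cdots\widehat{x_j}\cdots\wedge x_k$. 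The next step is to decompose each exterior power $\Lambda^k(L_1\oplus L_2)$ into its bidegree components $\bigoplus_{p+q=k}\Lambda^p L_1 \otimes \Lambda^q L_2$ and to observe that, because the only nonzero bracket is $c:L_1\wedge L_1\to L_2$, the Chevalley--Eilenberg differential respects a suitable grading (the grading by total Lie-degree, $\deg L_1 = 1$, $\deg L_2 = 2$). Concretely, on $\Lambda^2(L_1\oplus L_2) = \Lambda^2 L_1 \oplus (L_1\otimes L_2) \oplus \Lambda^2 L_2$ the boundary sends $\Lambda^2 L_1 \to L_2$ via $c$ and kills the other summands (since $[L_1,L_2]=0=[L_2,L_2]$); on $\Lambda^3(L_1\oplus L_2)$ the boundary sends $\Lambda^3 L_1 \to L_1\otimes L_2$ via $(\mathbb{1}\otimes c)\circ J$ where $J$ is the Jacobiator-type map $x\wedge y\wedge z \mapsto x\wedge(y\wedge z) + y\wedge(z\wedge x) + z\wedge(x\wedge y)$, and kills the summands $\Lambda^2 L_1\otimes L_2$, etc. — exactly by the same computation that produces the subgroup $S$ in Theorem \ref{cohom_two_step}. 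Thus the truncation of the Chevalley--Eilenberg chain complex in degrees $0,1,2,3$ contains, as a direct summand, precisely the complex $\Lambda^3 L_1 \to (L_1\otimes L_2)\oplus\Lambda^2 L_1 \to L_1\oplus L_2 \to \Z$ of \eqref{complex} (the "$\to \Z$" being the zero map in degree $0$), and the complementary summand $\Lambda^2 L_1\otimes L_2 \oplus \cdots$ has zero differential in the relevant range and so contributes nothing new to $H^1$ and $H^2$ — wait, I need to be careful here: the complementary pieces could contribute to $H^2$ of the Lie ring via summands like $\Lambda^2 L_1\otimes L_2$ sitting in degree $4$ of $\Lambda^{\bullet}$ but that is degree $\geq 3$ in the complex, hence irrelevant for $H^{\leq 2}$. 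The point is that for computing $H^1$ and $H^2$ one only needs degrees $0$ through $3$, and in that range the bidegree components that survive are exactly those in \eqref{complex}.

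Having matched the complexes, I would then apply $\Hom_\Z(-,M)$ and observe that $\Hom_\Z$ commutes with finite direct sums, so $H^i$ of the Hom-dual of the Chevalley--Eilenberg complex equals $H^i$ of the Hom-dual of \eqref{complex} for $i\leq 2$, which by Corollary \ref{corol} is $H^i(G,M)$. One subtlety to flag: $\Hom_\Z(-,M)$ is not exact, so one cannot simply say "dualize and take cohomology commutes with everything" — but here the argument is structural (identifying complexes up to a direct summand with zero differential in the relevant degrees, before dualizing), so exactness is not needed. A second, more conceptual route would be to invoke directly the remark in the introduction that "both of them coincide with the corresponding cohomology of the Chevalley--Eilenberg complex divided by the fourth part of a certain canonical filtration" — but since the excerpt does not give us that statement with proof, the self-contained path through Corollary \ref{corol} is preferable.

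The main obstacle I anticipate is the bookkeeping in the second paragraph: verifying that the Chevalley--Eilenberg differential $\Lambda^3(L_1\oplus L_2)\to\Lambda^2(L_1\oplus L_2)$, restricted to $\Lambda^3 L_1$ and projected to $L_1\otimes L_2$, really is $(\mathbb{1}\otimes c)\circ J$ with the correct signs, and that all the other bidegree components of the degree-$\leq 3$ part of the complex either vanish or decouple as a zero-differential summand. This is a finite but sign-sensitive computation with the standard Chevalley--Eilenberg formula, using repeatedly that $L_2$ is central in $L^\surd G$ and that $\Lambda^2 L_2$ and higher pieces of $L_2$ play no role below degree $4$. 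Once that identification is in hand, the isomorphism $H^i(G,M)\cong H^i(L^\surd G,M)$ for $i\leq 2$ follows formally.
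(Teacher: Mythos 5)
Your overall strategy --- identifying the low-degree part of the Chevalley--Eilenberg complex of $L^\surd G$ with the complex \eqref{complex} of Corollary \ref{corol} --- is the same as the paper's, but there is a genuine gap at the decisive step. The truncated Chevalley--Eilenberg complex does split as \eqref{complex} plus a complementary subcomplex, but that complement is \emph{not} a zero-differential summand that can be discarded; and if its differential really were zero, your argument would prove the wrong statement. Concretely, the complement contains $\Lambda^2 L_2$ in homological degree $2$ (not degree $4$: you are conflating the Lie-weight grading, in which $\Lambda^2 L_2$ has weight $4$, with the homological degree of the exterior power) and $\Lambda^2 L_1\otimes L_2$ in homological degree $3$, which is precisely the target of the differential whose dual kernel computes $H^2$. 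The Chevalley--Eilenberg boundary restricted to the complement is the nonzero map $\Lambda^2 L_1\otimes L_2\to\Lambda^2 L_2$, $x\wedge y\otimes z\mapsto c(x\wedge y)\wedge z$. Had this map been zero, the complement would contribute a direct summand $\Hom_{\Z}(\Lambda^2 L_2,M)$ to $H^2(L^\surd G,M)$, which is nonzero whenever $L_2$ has rank $\geq 2$ and $M\neq 0$, and the proposition would fail.

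What actually saves the argument --- and what the paper's proof supplies --- is that the cokernel of $x\wedge y\otimes z\mapsto c(x\wedge y)\wedge z$ is a quotient of $L_2\otimes(L_2/[G,G])$, which is torsion because $L_2=\sqrt{\gamma_2(G)}$ contains $[G,G]$ with torsion quotient. Since $M$ is torsion-free, left-exactness of $\Hom_{\Z}(-,M)$ shows that the dual map $\Hom_{\Z}(\Lambda^2 L_2,M)\to\Hom_{\Z}(\Lambda^2 L_1\otimes L_2,M)$ is injective, so the dualized complement (the ``fourth filtration part'' in the paper's terminology) has vanishing cohomology in degrees $1$ and $2$, and the long exact sequence of the resulting short exact sequence of complexes then gives the claim. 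Note that this torsion argument is the only place where the hypotheses that $M$ is torsion-free and that $L^\surd G$ arises from a group (so that $\Coker c$ is torsion) enter; your proof never invokes either, which is a symptom of the missing step.
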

\begin{proof}
We know that the cohomology of the Lie algebra
$L=L^{\surd}G = L_1 \oplus L_2$ is the cohomology of the trivialized Chevalley-Eilenberg complex $\Z \otimes_{L} C_*(L) \cong \Lambda (L)$ of $L$ where $\Lambda (L)$ is the exterior algebra over $L$. 
Now the filtration $L^{\surd}G \supseteq L_2 \supseteq 0$ on the Lie algebra induces a filtration on the tensor algebra and hence on the exterior algebra $\Lambda (L)$, and since the differentials respect the filtration we get a filtration of the complex $\Z \otimes_{L} C_*(L)$. The reader can check that the complex (\ref{complex}) from Corollary~\ref{corol}, used for the computation of the cohomology of $G$, is isomorphic to the trivialized Chevalley-Eilenberg complex with the fourth filtration part divided out. 

On the other hand, we check that dividing out the fourth filtration part will not affect the $i^{th}$ cohomology ($i\leq 2$) of $\Z \otimes_{L} C_*(L)$ with coefficients in a torsion-free trivial module $M$. We first show that the first and second cohomology of the fourth filtration part of the complex are trivial. For the first cohomology, this is immediate since the fourth filtration part of $L$ is trivial. To see that the second cohomology is trivial, we show that the map $\delta_2^4$ on $\Hom$-level, induced by the differential $d_3^4$ of the fourth filtration of the complex, is injective. By left-exactness of $\Hom$, it suffices to show that the cokernel of $d_3^4$ is torsion. But in degree 2, the fourth filtration part is just $L_2\wedge L_2$ since $L$ is nilpotent of class $\le 2$, and an element $x \wedge y \wedge z$ with $x , \, y \in L_1$ and $z \in L_2$ has image $z \wedge c(x \wedge y)$ under $d_3^4$. Thus the cokernel of $d_3^4$ is a quotient of $L_2\otimes (L_2 / [G,G])$ which is torsion as $L_2 / [G,G]$ is. 

So the first and second cohomology of the fourth filtration part of the complex are indeed trivial. Now we can use the long exact sequence associated to the short exact sequence of complexes, obtained by injecting the fourth filtration part in the complex and dividing it out, that the first and second cohomology groups remain unchanged when we divide out the fourth filtration. This shows that the first and second cohomology of the group $G$ and the Lie algebra $L^{\surd}G$ coincide.
\end{proof}

To prove Theorem \ref{cohom_two_step}, we introduce a filtration on both $\ptensor$ and $\pthree (G)$ and use Theorem \ref{general_formula}. It follows from the previous section that there is a finite filtration
\[P_l^{\sqrt{}} (G) = \frac{I G}{\paug^{l+1} G} \supseteq \frac{\paug^2 G}{\paug^{l+1} G} \supseteq \cdots \supseteq \frac{\paug^l G}{\paug^{l+1}} \supseteq 0\] of $P_l^{\sqrt{}} (G)$.
The quotients $Q_k =  \paug^k G \big/ \paug^{k+1} G $ are isomorphic to $U_k$ under the map $\theta_k$ by Theorem \ref{U_is_Gr}, where $\theta_k$ is the degree $k$ part of $\theta$. %Since these are all free abelian groups, there are (non-canonical) isomorphisms between respectively $\ptwo (G)$ and $\pthree (G)$ and the direct sum of their respective filtration quotients. 
From now on, we write $\otimes = \otimes_{\Z}$. The sequence $\{F_{k,l}\}_{k \geq 2}$ with $F_{k,l} = \sum_{i+j=k} \frac{\paug^i G}{\paug^{l+1} G} \otimes \frac{\paug^j G}{\paug^{l+1} G}$ forms a filtration of the tensor product $P_l^{\sqrt{}} (G) \otimes P_l^{\sqrt{}}(G)$. 
We claim that we can describe the filtration quotients as follows. 
\begin{lemma} \label{quotients}
There are canonical isomorphisms $F_{k,l} \big/F_{k+1,l} \cong \oplus_{i+j=k} U_i \otimes U_j$.
\end{lemma}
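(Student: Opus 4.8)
The plan is to treat the filtration of $P_l^{\sqrt{}}(G)$ by the submodules $A^i := \paug^i G/\paug^{l+1} G$ (for $1\le i\le l+1$, so that $A^1=P_l^{\sqrt{}}(G)$ and $A^{l+1}=0$): its associated graded pieces $Q_i=A^i/A^{i+1}\cong U_i$ are free abelian, hence the filtration splits, and I want to push that splitting through $\otimes=\otimes_{\Z}$. This is the familiar mechanism by which the associated-graded functor commutes with tensor products for split filtrations.

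First I would exhibit the canonical candidate map. For $i+j=k$ the multiplication $A^i\otimes A^j\to P_l^{\sqrt{}}(G)\otimes P_l^{\sqrt{}}(G)$ has image inside $F_{k,l}$; moreover $A^{i+1}\otimes A^j$ and $A^i\otimes A^{j+1}$ have image inside $F_{k+1,l}$, so these maps descend and assemble into a natural homomorphism
\[ \bigoplus_{i+j=k}\, Q_i\otimes Q_j\ \longrightarrow\ F_{k,l}/F_{k+1,l}, \]
the index running over $i,j\ge 1$ (equivalently $1\le i,j\le l$). Composing with the $\theta_i^{-1}\otimes\theta_j^{-1}$ and Theorem \ref{U_is_Gr} turns this into a natural map $\bigoplus_{i+j=k}U_i\otimes U_j\to F_{k,l}/F_{k+1,l}$. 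Surjectivity is immediate from the definition of $F_{k,l}$; the content of Lemma \ref{quotients} is the injectivity.

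For injectivity I would use that, $G$ being finitely generated nilpotent, each $L_m^{\surd}G$ is free abelian, so by Poincar\'{e}--Birkhoff--Witt each $U_m=U_m(L^{\surd}G)$ is finitely generated free abelian; hence $Q_i$ is free and each extension $0\to A^{i+1}\to A^i\to Q_i\to 0$ splits. Choosing successively a complement $V_i$ of $A^{i+1}$ in $A^i$ yields a decomposition $P_l^{\sqrt{}}(G)=\bigoplus_{i=1}^{l}V_i$ with $A^k=\bigoplus_{i\ge k}V_i$ and $V_i\xrightarrow{\ \sim\ }Q_i$. Since each $A^i$ is now a direct summand, $A^i\otimes A^j$ embeds into the tensor square with image $\bigoplus_{a\ge i,\,b\ge j}V_a\otimes V_b$, and summing over $i+j=k$ gives $F_{k,l}=\bigoplus\{\,V_a\otimes V_b:\ a+b\ge k\,\}$ — here one invokes the elementary fact that for $a,b\ge 1$ and $k\ge 2$ there exist $i,j\ge 1$ with $i+j=k$, $i\le a$, $j\le b$ precisely when $a+b\ge k$. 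Consequently $F_{k,l}/F_{k+1,l}=\bigoplus_{a+b=k}V_a\otimes V_b$, and under all these identifications the natural map above becomes the identity inclusion. It is therefore an isomorphism, and being natural it does not depend on the chosen complements, which gives the canonical isomorphism claimed.

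The routine parts are the two well-definedness checks and the direct-sum bookkeeping; the only input that genuinely has to be used is the freeness of the $U_i$ (equivalently of the $L_m^{\surd}G$), which is exactly what forces the filtration to split. The one mild nuisance I anticipate is keeping the index ranges straight ($1\le i,j\le l$, $2\le k\le 2l$) so that the statement $F_{k,l}/F_{k+1,l}\cong\bigoplus_{i+j=k}U_i\otimes U_j$ reads correctly at the extreme values of $k$.
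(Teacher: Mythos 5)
Your proof is correct and follows essentially the same route as the paper: both exploit the freeness of the graded pieces $Q_i\cong U_i$ to split the filtration of $P_l^{\surd}(G)$ into a direct sum of complements and then read off the filtration quotients of the tensor square from the resulting decomposition $\bigoplus_{a+b\ge k}V_a\otimes V_b$. The only (harmless) presentational difference is that you define the canonical comparison map first and use the splitting only to prove bijectivity, whereas the paper constructs the isomorphism from the chosen splittings and verifies afterwards that it does not depend on the choice.
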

\begin{proof} 
Since the groups $Q_n$ are free abelian, there are splittings $s_{n,l}$ for the sequences 
\[\xymatrix{0 \ar[r] & I_{\surd}^{n+1}G / I_{\surd}^{l+1}G \ar[r] & I_{\surd}^{n}G \big/ I_{\surd}^{l+1}G \ar[r] & Q_n \ar[r] & 0.}\] We can consider $s_{n,l}$ as a map $Q_n \to I_{\surd}^{n}G \big/ I_{\surd}^{l+1}G \subseteq I^{i}_{\surd}G \big/ I^{l+1}_{\surd}G$ for $n \geq i$.
By descending induction, we combine these maps to get isomorphisms $\alpha_{i,l} = \bigoplus_{n=i}^l s_{n,l} : \bigoplus_{n=i}^{l} Q_n \to I^{i}_{\surd}G \big/ I^{l+1}_{\surd}G$. Observe that $\alpha = \alpha_{1,l}$ gives an isomorphism of filtered groups between $Q=\bigoplus_{n=1}^{l} Q_n$ and $P_l^{\surd}G$, where the filtration of $Q$ is given by the nested subgroups $Q^i = \bigoplus_{n=i}^l Q_n$. A filtration of the tensor product $Q \otimes Q$ is given by $Q \otimes Q \supseteq \sum_{i+j=2} Q^i \otimes Q^j \supseteq \sum_{i+j=3} Q^i \otimes Q^j \supseteq \ldots$, and the map $\alpha \otimes \alpha : Q \otimes Q \to P_l^{\surd}G \otimes P_l^{\surd}G$ becomes an isomorphism of filtered groups. That means that there are induced isomorphisms on the filtration quotients. Now observe that the $k^{th}$ filtration group $\sum_{i+j=k} Q^i \otimes Q^j$ equals $\bigoplus_{i+j \geq k} Q_i \otimes Q_j$, and that the $k^{th}$ filtration quotient is $\bigoplus_{i+j=k} Q_i \otimes Q_j$. It follows that $\alpha$ induces an isomorphism $\bigoplus_{i+j=k} Q_i \otimes Q_j \cong F_{k,l} \big/ F_{k+1,l}$ for all $ k \geq 2$. 

If we replace $s_{n,l}$ by another splitting $s_{n,l}'$, the difference will lie in $I_{\surd}^{n+1}G / I_{\surd}^{l+1}G$ and will disappear after taking the filtration quotients. Therefore the isomorphisms are canonical, and the lemma follows by Theorem \ref{U_is_Gr}.
\end{proof}
There is a quotient map $q : \ptwo (G) \otimes \ptwo (G) \rightarrow \ptensor$ mapping the tensor product over $\Z$ to the tensor product over $G$, and we define $F_k = q (F_{k,2})$. Now we have a filtration $\ptensor = F_2 \supseteq F_3 \supseteq F_4 \supseteq 0$ of $\ptensor$. %We investigate the filtration quotients. 
By the previous lemma, it is clear that $q$ induces surjective maps $\overline{q}_2 : L_1 \otimes L_1 \rightarrow F_2 \big/ F_3$ and $\overline{q}_3 : (L_1 \otimes U_2) \oplus (U_2 \otimes L_1) \rightarrow F_3 \big/ F_4$. %But what is in the kernel...?

\begin{lemma} \label{approximations} The following statements hold.
\begin{itemize}
\item The map $\overline{q}_2 : L_1 \otimes L_1 \rightarrow F_2 \big/ F_3$ is an isomorphism. 
\item There is an exact sequence 
\[\xymatrix{(L_1)^{\otimes 3} \ar[r]^-{\overline{\gamma}} & (L_1 \otimes U_2) \oplus (U_2 \otimes L_1) \ar[r]^-{\overline{q}_3} & F_3 \big/ F_4  \ar[r] & 0,}\] with $\overline{\gamma} (x \otimes y \otimes z) = ( x \otimes \mu_U(y,z),- \mu_U(x,y) \otimes z)$ where $\mu_U$ is multiplication in $U(L^{\surd} G)$. 
\end{itemize}
\end{lemma}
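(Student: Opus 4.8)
The plan is to exploit that $q\colon\ptwo(G)\otimes\ptwo(G)\to\ptensor$ is compatible with, and strict for, the filtrations $\{F_{k,2}\}$ and $\{F_k\}$, so that both assertions become computations on the associated graded objects, which by Lemma~\ref{quotients} and Theorem~\ref{U_is_Gr} are governed by $U(L^{\surd}G)$. The single fact that does all the work is the containment $\Ker q\subseteq F_{3,2}$.

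To prove $\Ker q\subseteq F_{3,2}$, recall that $\ptensor$ is the tensor product over $\Z G$ of $\ptwo(G)=IG/\paug^3 G$ with itself, formed with its natural $\Z G$-bimodule structure --- right multiplication on the first factor, left multiplication on the second --- so that $\Ker q$ is the $\Z$-span of the elements $ug\otimes v-u\otimes gv$ with $u,v\in\ptwo(G)$ and $g\in G$. Using $ug=u+u(g-1)$ and $gv=v+(g-1)v$, such an element equals $u(g-1)\otimes v-u\otimes(g-1)v$; since $u$ and $v$ have filtration degree $\ge 1$ in $\ptwo(G)$ and multiplying by $g-1$ raises filtration degree by at least $1$, both summands lie in $F_{3,2}$. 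Given this, the first bullet is immediate: $F_{2,2}=\ptwo(G)\otimes\ptwo(G)$, so $F_2/F_3=\ptwo(G)\otimes\ptwo(G)\big/(F_{3,2}+\Ker q)=\ptwo(G)\otimes\ptwo(G)/F_{3,2}$, which is $U_1\otimes U_1=L_1\otimes L_1$ by Lemma~\ref{quotients}, and $\overline q_2$ is exactly this canonical identification (equivalently, $\overline q_2$ is onto with kernel the image of $\Ker q$ in $F_{2,2}/F_{3,2}$, and that image is zero).

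For the second bullet, surjectivity of $\overline q_3$ holds because $F_3/F_4$ is a quotient of $F_{3,2}/F_{4,2}\cong(L_1\otimes U_2)\oplus(U_2\otimes L_1)$. For exactness in the middle term, the containment $\Ker q\subseteq F_{3,2}$ forces a class $\xi+F_{4,2}$ in $F_{3,2}/F_{4,2}$ to vanish in $F_3/F_4$ exactly when $\xi\in F_{4,2}+\Ker q$, so $\Ker\overline q_3$ is precisely the image of $\Ker q$ in $F_{3,2}/F_{4,2}\cong(L_1\otimes U_2)\oplus(U_2\otimes L_1)$; the remaining task is to compute this image. By bilinearity, $\Ker q$ is spanned by the elements $p_2^{\surd}(h)(g-1)\otimes p_2^{\surd}(h')-p_2^{\surd}(h)\otimes(g-1)p_2^{\surd}(h')$, $g,h,h'\in G$, where $p_2^{\surd}(h)(g-1)$ and $(g-1)p_2^{\surd}(h')$ are the classes of $(h-1)(g-1)$ and $(g-1)(h'-1)$ in $\paug^2 G/\paug^3 G=Q_2$. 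By Theorem~\ref{U_is_Gr} the $Q_2$-part of such a product is the corresponding product in $U(L^{\surd}G)$ of the $Q_1$-parts of its factors, namely $\mu_U(\bar h,\bar g)$ and $\mu_U(\bar g,\bar h')$, where $\bar g$ denotes the image of $g$ in $L_1=U_1$. Hence the image in $F_{3,2}/F_{4,2}$ of the displayed generator is $\bigl(-\bar h\otimes\mu_U(\bar g,\bar h'),\ \mu_U(\bar h,\bar g)\otimes\bar h'\bigr)=-\overline\gamma(\bar h\otimes\bar g\otimes\bar h')$; as $(g,h,h')$ ranges over $G^3$ the triple $(\bar h,\bar g,\bar h')$ ranges over all of $L_1^3$, so the $\Z$-span of these images is exactly $\Image\overline\gamma$ (and, incidentally, the formula for $\overline\gamma$ is then visibly trilinear, hence well defined on $(L_1)^{\otimes 3}$). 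Thus $\Ker\overline q_3=\Image\overline\gamma$, as claimed.

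The step I expect to be most delicate is the bookkeeping with the two module actions. It is crucial to use the honest $\Z G$-bimodule structure --- right action on the left tensor slot, left action on the right --- because that is what converts multiplication by $g$ into multiplication by $g-1=p_2^{\surd}(g)$, the operation that actually lowers filtration degree; and it is the left/right ordering of multiplication in $U(L^{\surd}G)$ delivered by Theorem~\ref{U_is_Gr} that produces precisely the factor orders and the signs appearing in $\overline\gamma$. Reversing either convention inserts a spurious symmetric correction term and breaks both bullets. A minor but necessary check is that every generator of $\Ker q$ of filtration degree $>3$ --- those coming from $g\in\sqrt{\gamma_2(G)}$ or built from elements of $\paug^2 G$ --- automatically lies in $F_{4,2}$ and so contributes nothing, which is what makes the description of $\Ker\overline q_3$ an equality rather than merely a containment.
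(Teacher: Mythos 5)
Your proof is correct and follows essentially the same route as the paper: both identify $\Ker q$ with the span of the associativity relations $ug\otimes v-u\otimes gv$ (the paper packages this as the exact sequence involving $\gamma(x\otimes y\otimes z)=x\otimes\mu_P(y,z)-\mu_P(x,y)\otimes z$), observe that this kernel lies in $F_{3,2}$, and then read off both bullets on the filtration quotients via Lemma~\ref{quotients} and Theorem~\ref{U_is_Gr}. You merely spell out more explicitly the computation, which the paper leaves to the reader, that the image of $\Ker q$ in $F_{3,2}/F_{4,2}$ is exactly $\Image\overline{\gamma}$.
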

\begin{proof}
It is clear that there is an exact sequence
\[\xymatrix{(\ptwo (G))^{\otimes 3} \ar[r]^-{\gamma}& \ptwo (G) \otimes \ptwo (G) \ar[r]^{q} & \ptensor  \ar[r] & 0}\] with $\gamma (x \otimes y \otimes z) = x \otimes \mu_P(y,z)- \mu_P(x,y) \otimes z$ where $\mu_P$ is multiplication in $\ptwo (G)$. Observe that the image of $\gamma$ lies completely in the filtration part $F_{3,2}$ of $\ptwo (G) \otimes \ptwo (G)$. This means that $q$ induces an isomorphism $\ptwo (G) \otimes \ptwo (G) \big/ F_{3,2} \cong \ptensor \big/ F_3$, which proves the first statement. 

Now if we divide $\ptwo (G) \otimes \ptwo (G)$ and $\ptensor$ by $F_{4,2}$ and $F_4$ respectively, one can show that the map induced by $\gamma$ factors through the composition $(\ptwo (G))^{\otimes 3} \twoheadrightarrow (P_1^{\sqrt{}}(G))^{\otimes 3} \overset{(\theta_1^{-1})^{\otimes 3}}{\cong} (L_1)^{\otimes 3}$ to a map $\overline{\gamma}$, such that we get an exact sequence
\[\xymatrix{(L_1)^{\otimes 3} \ar[r]^-{\overline{\gamma}}  & \ptwo (G) \otimes \ptwo (G) \Big/ F_{4,2} \ar[r]^{\overline{q}} & \ptensor \Big/ F_4 \ar[r] & 0.}\] Since $\overline{\gamma}$ has image in $F_{3,2}$ and $F_{3,2} \big/ F_{4,2} \cong (L_1 \otimes U_2) \oplus (U_2 \otimes L_1)$ by Lemma \ref{quotients}, the second statement follows.
\end{proof}

\begin{lemma}\label{torsion}
The group $F_4$ is torsion. 
\end{lemma}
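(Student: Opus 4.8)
The plan is to reduce the statement to one computation inside the tensor product over $G$. The group $F_4=q(F_{4,2})$ is generated, as an abelian group, by the simple tensors $q(\overline{\xi}\otimes\overline{\eta})$ with $\xi,\eta\in I_{\surd}^{2}G$, where $\overline{(\cdot)}$ denotes the image in $\ptwo(G)=IG/I_{\surd}^{3}G$ (indeed, among the summands of $F_{4,2}=\sum_{i+j=4}\frac{I_{\surd}^{i}G}{I_{\surd}^{3}G}\otimes\frac{I_{\surd}^{j}G}{I_{\surd}^{3}G}$ only the term $i=j=2$ survives). Since a group generated by torsion elements is itself torsion, it suffices to show that each $q(\overline{\xi}\otimes\overline{\eta})$ has finite order. (Alternatively, $\ptwo(G)$ is finitely generated, carrying a finite filtration with quotients $U_1$ and $U_2$, so $F_4$ is finitely generated and it would even be enough to prove $F_4\otimes_{\Z}\Q=0$.)

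First I would pass from isolators to ordinary powers: by definition of $I_{\surd}^{2}G$ there is an integer $M>0$ with $M\xi\in I^{2}G$, and by $\Z$-bilinearity $M\cdot q(\overline{\xi}\otimes\overline{\eta})=q(\overline{M\xi}\otimes\overline{\eta})$. Hence it is enough to prove that $q(\overline{z}\otimes\overline{\eta})=0$ for every $z\in I^{2}G$ and every $\eta\in I_{\surd}^{2}G$; it then follows that $q(\overline{\xi}\otimes\overline{\eta})$ has order dividing $M$.

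For the core step, write $z=\sum_i (a_i-1)(b_i-1)$ with $a_i,b_i\in G$, which is possible because $IG$ is $\Z$-spanned by the elements $g-1$ and $I^{2}G=IG\cdot IG$. In $\ptwo(G)$ one has $\overline{(a-1)(b-1)}=a\cdot\overline{b-1}-\overline{b-1}$, so this class lies in $IG\cdot\ptwo(G)$; inside $\ptensor$ the two $\Z G$-module structures may be swapped (under the relevant convention $g\cdot x\otimes_G y=x\otimes_G g^{-1}\cdot y$), which lets one push the augmentation factor onto the second tensor component:
\[ q\big((a\cdot\overline{b-1}-\overline{b-1})\otimes\overline{\eta}\big)=q\big(\overline{b-1}\otimes(a^{-1}\cdot\overline{\eta}-\overline{\eta})\big)=q\big(\overline{b-1}\otimes\overline{(a^{-1}-1)\eta}\big). \]
Here $(a^{-1}-1)\eta\in I_{\surd}^{1}G\cdot I_{\surd}^{2}G\subseteq I_{\surd}^{3}G$, so $\overline{(a^{-1}-1)\eta}=0$ in $\ptwo(G)$; summing over $i$ gives $q(\overline{z}\otimes\overline{\eta})=0$, as desired.

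The only delicate point is the bookkeeping with the two module structures on $\ptwo(G)$ inside $\ptensor$: one must use the correct interchange identity (hence the appearance of $a^{-1}$ rather than $a$) and check that the transferred factor genuinely raises the filtration degree of the second component — which is exactly what forces $\eta$ to lie in $I_{\surd}^{2}G$ and explains why it is $F_4$, and not $F_3$, that is torsion. Replacing the isolator $I_{\surd}^{2}G$ by the honest power $I^{2}G$ in the first step is by contrast harmless, since it costs only a torsion multiple — precisely what we are entitled to ignore.
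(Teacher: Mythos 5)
Your proof is correct and follows essentially the same route as the paper: clear torsion to land in an honest power $I^2G$, then use the $\otimes_G$ relation to shift a degree-one factor across the tensor sign so that one slot accumulates an element of $I_{\surd}^3G$, which vanishes in $\ptwo(G)$. The only (harmless) differences are that you clear torsion on just one tensor factor rather than both, and you shift the group element from the first slot to the second instead of the reverse.
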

\begin{proof}
Remember that $F_4 = q \Big(\frac{\paug^2 G }{\paug^3 G} \otimes \frac{\paug^2 G}{\paug^3 G}\Big)$. Take $x, \, y \in \paug^2 G$. By the definition of $I_{\surd}^2 G$, we can find integers $n, \, m > 0$ such that $nx, \, my \in I^2 G$. Write $nx = \sum_i x_i x_i'$ and $my = \sum_j y_j y_j'$ with $x_i, \, x_i', \, y_j, \, y_j' \in IG$. Now $nm ( \overline{x} \otimes \overline{y})= \overline{nx} \otimes \overline{my} = \sum_{i, \, j} \overline{x_i} \overline{x_i'} \otimes \overline{y_j} \overline{y_j'}$ in $F_4$, and since we take the tensor product over $G$, it equals  $\sum_{i, \, j} \overline{x_i x_i' y_j} \otimes \overline{y_j'}$. Since $x_i x_i' y_j \in \paug^3 G$, the element $nm ( \overline{x} \otimes \overline{y})$ is trivial in $F_4$. This shows that the generators of the abelian group $F_4$ are torsion elements, so $F_4$ is a torsion group. 
\end{proof}

Define $F_k'=\frac{\paug^k G}{\paug^4 G}$, such that we have the filtration $\pthree G= F_1' \supseteq F_2' \supseteq F_3' \supseteq 0$ as above. It is clear that the map $\widetilde{\mu}$ induced by multiplication takes $F_k$ to $F_k'$ and thus preserves the filtrations.
This means that there is a commutative diagram
\begin{equation}\label{first_diagram}\xymatrix{0 \ar[r] & F_3 / F_4  \ar[d]^-{\mu_3} \ar[r] & F_2 / F_4 \ar[d]^-{\overline{\mu}} \ar[r] & F_2 / F_3 \ar[d]^-{\mu_2}\ar[r] & 0\\ 0 \ar[r] & \paug^3 G \big/ \paug^4 G \ar[r] & \polythree(G) \ar[r]& \ptwo (G) \ar[r] & 0}\end{equation}
with exact rows. We will see that we can study $\overline{\mu}$ instead of $\widetilde{\mu}$, and we use the snake lemma to reduce the study of $\overline{\mu}$ to the study of $\mu_2$ and $\mu_3$.
First we concentrate on $\mu_2$.

Let $l_2 : L_1 \wedge L_1 \rightarrow L_1 \otimes L_1$ be the map that takes $x \wedge y$ to $x \otimes y - y \otimes x$. We have the following proposition.
\begin{proposition}\label{second}
If $\mu_2^*: \Hom_{\Z}(\polytwo (G),M) \to \Hom_{\Z}(F_2 / F_3, M)$ and $c^*: \Hom_{\Z}(L_2, M) \to \Hom_{\Z}(\Lambda^2 L_1, M)$ are pre-composition by $\mu_2$ and $c$ respectively, then $\Coker \mu_2^* \cong \Coker c^*$.
\end{proposition}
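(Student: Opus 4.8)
The plan is to make the map $\mu_2$ completely explicit, then reduce the computation from $\polytwo(G)$ to its second filtration quotient $U_2$, and finally compare the two short exact sequences describing $U_2$ and $L_1\otimes L_1$ by means of the snake lemma.

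\emph{Identifying $\mu_2$.} By the first part of Lemma~\ref{approximations}, $\overline q_2$ is an isomorphism $L_1\otimes L_1\xrightarrow{\ \sim\ }F_2/F_3$, and a generator $\overline g\otimes\overline h$ (with $g,h\in G$) is represented by $(g-1)\otimes_G(h-1)$. Multiplication in $IG$ carries this to the class of $(g-1)(h-1)\in\paug^2 G$ in $\polytwo(G)$; this class lies in the second filtration part $Q_2=\paug^2 G/\paug^3 G$ and, by Theorem~\ref{U_is_Gr} together with the multiplicativity of $\theta$, equals $\theta_2(\overline g\,\overline h)$. Hence, under the identification $F_2/F_3\cong L_1\otimes L_1$, the map $\mu_2$ becomes the composite $L_1\otimes L_1\xrightarrow{\ \mu_U\ }U_2\hookrightarrow\polytwo(G)$, where $\mu_U$ is multiplication in $U(L^\surd G)$ and the second arrow is the canonical inclusion of the filtration quotient $Q_2\cong U_2$ into $\polytwo(G)$.

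\emph{Reducing to $U_2$ and comparing with $c$.} Since $\polytwo(G)/U_2\cong L_1$ is free abelian, the inclusion $U_2\hookrightarrow\polytwo(G)$ is split, so precomposition with it is a surjection $\Hom_\Z(\polytwo(G),M)\twoheadrightarrow\Hom_\Z(U_2,M)$; as $\mu_2^*$ factors through it, $\Image\mu_2^*=\Image\mu_U^*$ inside $\Hom_\Z(L_1\otimes L_1,M)$, and therefore $\Coker\mu_2^*\cong\Coker\mu_U^*$, where $\mu_U^*\colon\Hom_\Z(U_2,M)\to\Hom_\Z(L_1\otimes L_1,M)$. Now $\mu_U$ fits into a morphism of short exact sequences
\[
\xymatrix{
0 \ar[r] & \Lambda^2 L_1 \ar[r]^-{l_2} \ar[d]_-{c} & L_1\otimes L_1 \ar[r] \ar[d]_-{\mu_U} & S^2 L_1 \ar[r] \ar@{=}[d] & 0 \\
0 \ar[r] & L_2 \ar[r] & U_2 \ar[r] & S^2 L_1 \ar[r] & 0 ,
}
\]
the bottom row being~(\ref{equation}) and the top row the usual decomposition of $L_1\otimes L_1$; the left square commutes because $\mu_U(l_2(x\wedge y))=xy-yx=[x,y]=c(x\wedge y)$ in $U(L^\surd G)$, and the right square commutes because $\mu_U$ is compatible with the projections onto $S^2 L_1$. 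Applying $\Hom_\Z(-,M)$ keeps both rows exact (as $S^2 L_1$ is free abelian) and yields a morphism of short exact sequences which is the identity on $\Hom_\Z(S^2 L_1,M)$ and equals $\mu_U^*$, $c^*$ in the remaining two slots; the snake lemma then gives $\Coker\mu_U^*\cong\Coker c^*$ (and, incidentally, $\Ker\mu_U^*\cong\Ker c^*$). Combining with the first reduction, this is the claim.

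The one genuinely delicate point is the identification of $\mu_2$ above: one must check carefully that multiplication in $IG$, corestricted along the filtration of $\polytwo(G)$ and read through the isomorphisms $Q_k\cong U_k$ of Theorem~\ref{U_is_Gr}, really is the enveloping-algebra product — this is exactly where the explicit description of $\theta$ (sending $g_i\sqrt{\gamma_{i+1}(G)}$ to $(g_i-1)+\paug^{i+1}G$) and its multiplicativity enter. Everything after that is a routine diagram chase among free abelian groups.
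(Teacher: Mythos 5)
Your proposal is correct and takes essentially the same route as the paper: factor $\mu_2$ through the split inclusion $U_2\hookrightarrow\polytwo(G)$ to reduce to a map $L_1\otimes L_1\to U_2$ (which the paper calls $\hat\mu_2$ and you identify, correctly, as the enveloping-algebra multiplication), then compare the two split short exact sequences over $S^2L_1$ and apply the snake lemma after $\Hom_{\Z}(-,M)$. Your explicit verification that the left square commutes via $\mu_U(l_2(x\wedge y))=[x,y]=c(x\wedge y)$ is exactly the point the paper leaves implicit.
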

\begin{proof}
Define the map $j$ as the composition $U_2 \overset{\theta_2}{\cong} I^2_{\surd}(G) \big/ I^3_{\surd}(G) \hookrightarrow \polytwo(G)$. It is clear that $\mu_2$ has image in $I^2(G) \big/ I^3_{\surd}(G) \subseteq I^2_{\surd}(G) \big/ I^3_{\surd}(G)$, so we can define a map $\hat{\mu}_2 : F_2 / F_3 \to U_2$ such that $\mu_2 = j \circ \hat{\mu}_2$. We know that the sequence $\xymatrix@1{0 \ar[r] & U_2 \ar[r]^-{j} & \polytwo (G) \ar[r] & L_1 \ar[r] & 0}$ is exact, and it is split exact because $L_1$ is free abelian. It follows that $j^* : \Hom_{\Z} (\polytwo (G),M) \to \Hom_{\Z}(U_2,M)$ is surjective, so $\Coker \hat{\mu}_2^* = \Coker \mu_2^*$. Under the isomorphism $F_2 / F_3 \cong L_1 \otimes L_1$ of Lemma \ref{approximations}, we have the following commuting diagram with exact rows
\[\xymatrix{0 \ar[r] & \Lambda^2 L_1 \ar[d]^{c} \ar[r]^-{l_2} & L_1 \otimes L_1 \ar[d]^{\hat{\mu}_2} \ar[r] & S^2 L_1 \ar@{=}[d]\ar[r] & 0\\
0 \ar[r] & L_2 \ar[r] & U_2 \ar[r] & S^2 L_1 \ar[r] & 0,}\] where the lower sequence is exactly (\ref{equation}) of Section \ref{two}. Since $S^2 L_1$ is a free abelian group, both rows are split exact. It follows that the rows are still exact after applying $\Hom_{\Z}(-,M)$, so the snake lemma shows that $\Coker \hat{\mu}_2^*\cong \Coker c^*$.
\end{proof}

To find the kernel and cokernel of $\mu_3$, we need to have a better understanding of $F_3 / F_4$. First define a (surjective) map 
\[u : \Big(L_1 \otimes (L_1^{\otimes 2} \oplus L_2) \Big) \oplus \Big((L_1^{\otimes 2} \oplus L_2) \otimes L_1 \Big) \rightarrow (L_1 \otimes U_2) \oplus (U_2 \otimes L_1)\] induced by the quotient map $L_1^{\otimes 2} \oplus L_2 \rightarrow U_2$. On the other hand,  
there is a surjective map 
\[p : \Big(L_1 \otimes (L_1^{\otimes 2} \oplus L_2) \Big) \oplus \Big( (L_1^{\otimes 2} \oplus L_2) \otimes L_1 \Big) \rightarrow L_1^{\otimes 3} \oplus (L_1 \otimes L_2) \oplus (L_2 \otimes L_1),\]
that identifies $L_1 \otimes L_1^{\otimes 2}$ and $L_1^{\otimes 2} \otimes L_1$. 
By the second statement of Lemma \ref{approximations}, there is a diagram with exact rows and commuting left-hand square
\[\xymatrix{(L_1)^{\otimes 3} \ar[r] \ar@{=}[d]& \Big(L_1 \otimes (L_1^{\otimes 2} \oplus L_2) \Big) \oplus \Big( (L_1^{\otimes 2} \oplus L_2) \otimes L_1 \Big) \ar@{->>}[r]^-{p} \ar[d]^-{u} & (L_1)^{\otimes 3} \oplus (L_1 \otimes L_2) \oplus (L_2 \otimes L_1)\ar@{.>}[d]^-{\overline{u}}\\
(L_1)^{\otimes 3} \ar[r]^-{\overline{\gamma}} & (L_1 \otimes U_2) \oplus (U_2 \otimes L_1) \ar@{->>}[r]^-{\overline{q}_3} & F_3 \big/ F_4,}\] giving rise 
to a map $\overline{u}: L_1^{\otimes 3} \oplus (L_1 \otimes L_2) \oplus (L_2 \otimes L_1) \rightarrow F_3 / F_4$ on the quotients. The upper left map takes an element $x \otimes y \otimes z$ to $\big(x \otimes (y \otimes z,0),-(x \otimes y,0)\otimes z\big)$. If we apply a version of the snake lemma to this diagram, we deduce the following lemma.
\begin{lemma}\label{better_approximation}
The sequence 
\[\xymatrix{\Ker u \ar[r]^-{p} & L_1^{\otimes 3} \oplus (L_1 \otimes L_2) \oplus (L_2 \otimes L_1) \ar[r]^-{\overline{u}} & F_3 / F_4 \ar[r] & 0}\] is exact.
\end{lemma}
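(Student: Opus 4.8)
The plan is to read off the statement from the commutative diagram displayed just above, by a short diagram chase; this is the ``version of the snake lemma'' referred to there. Recall that a three-term sequence $X\to Y\to Z\to 0$ is exact as soon as it is exact at $Y$ and $Y\to Z$ is surjective, so there are only two things to prove. Note also that the genuine snake lemma does not literally apply, since the two rows of the diagram are only right exact rather than short exact; instead I would use the following facts, all already established: the left vertical map is the identity, $p$ and $u$ are surjective, $p\circ\iota=0$ where $\iota$ is the upper left horizontal map (its image lies in $\Ker p$), both squares of the diagram commute, and the bottom row is exact at its middle term by Lemma~\ref{approximations}.

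First I would dispose of surjectivity of $\overline u$: since $\overline q_3$ is surjective by Lemma~\ref{approximations} and $u$ is surjective by construction, the composite $\overline u\circ p=\overline q_3\circ u$ is surjective, hence so is $\overline u$. For exactness at $L_1^{\otimes 3}\oplus(L_1\otimes L_2)\oplus(L_2\otimes L_1)$, the inclusion $p(\Ker u)\subseteq\Ker\overline u$ is immediate, because $a\in\Ker u$ forces $\overline u(p(a))=\overline q_3(u(a))=0$. For the reverse inclusion I would take $b$ with $\overline u(b)=0$, lift it along the surjection $p$ to some $a$ in the domain, note that then $\overline q_3(u(a))=\overline u(p(a))=0$, so $u(a)\in\Ker\overline q_3=\Image\overline\gamma$, write $u(a)=\overline\gamma(w)$, use commutativity of the left square to rewrite $\overline\gamma(w)=u(\iota(w))$, conclude $a-\iota(w)\in\Ker u$, and finish with $p(a-\iota(w))=p(a)-0=b$, so that $b\in p(\Ker u)$ as wanted.

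The hard part, such as it is, is the input that makes the left square commute and $p\circ\iota$ vanish, so that is the step I would check first. One has to unwind that $p$ identifies $L_1\otimes L_1^{\otimes 2}$ with $L_1^{\otimes 2}\otimes L_1$, so that the two summands of $\iota(x\otimes y\otimes z)=\big(x\otimes(y\otimes z,0),\,-(x\otimes y,0)\otimes z\big)$ are carried to opposite elements of $L_1^{\otimes 3}$ and cancel; and that $u$, being induced by the surjection $L_1^{\otimes 2}\oplus L_2\to U_2$, sends those summands to $x\otimes\mu_U(y,z)$ and $-\mu_U(x,y)\otimes z$, i.e.\ to $\overline\gamma(x\otimes y\otimes z)$. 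Once the signs in these formulas are verified the chase above closes, and no further information about the internal structure of $U_2$ or of $F_3/F_4$ is needed; the real work had already gone into setting up the diagram through Lemma~\ref{approximations}.
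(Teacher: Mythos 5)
Your argument is correct and is essentially the paper's own proof made explicit: the paper merely says ``if we apply a version of the snake lemma to this diagram, we deduce the following lemma,'' and your chase (surjectivity of $\overline{u}$ from that of $u$ and $\overline{q}_3$, the inclusion $p(\Ker u)\subseteq\Ker\overline{u}$, and the reverse inclusion by lifting along $p$, using exactness of the bottom row at its middle term and commutativity of the left square) is exactly that unstated chase, together with the routine verification that $u\circ\iota=\overline{\gamma}$ and $p\circ\iota=0$. No gaps.
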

Now we can describe how $\mu_3$ behaves. 
Define $l_3 :  L_1 \otimes L_2 \to (L_1 \otimes L_2) \oplus (L_2 \otimes L_1)$ as the map taking $x \otimes y$ to $(x \otimes y, -y \otimes x)$. Also consider the restriction $\widetilde{u} : (L_1 \otimes L_2 ) \oplus (L_2 \otimes L_1) \to F_3 / F_4$ of $\overline{u}$. Now the following theorem holds. 
\begin{proposition}\label{third}
The sequence 
\[\xymatrix{0 \ar[r] & L_1 \otimes L_2 \big/ S \ar[r]^-{\omega} & F_3 / F_4 \ar[r]^-{\mu_3} & \paug^3 G / \paug^4 G \ar[r] &  0 }\] is exact, where $S$ is the subgroup of $L_1 \otimes L_2$ generated by all elements $x \otimes c(y \wedge z) + y \otimes c(z \wedge x) + z \otimes c(x \wedge y)$ with $x, \, y, \, z \in L_1$ and $\omega$ is induced by  the composition $L_1 \otimes L_2 \overset{l_3}{\rightarrow} (L_1 \otimes L_2) \oplus (L_2 \otimes L_1) \overset{\widetilde{u}}{\rightarrow} F_3 / F_4$.
\end{proposition}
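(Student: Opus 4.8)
The plan is to combine the presentation of $F_3/F_4$ from Lemma \ref{better_approximation} with the observation that, after identifying $\paug^3 G/\paug^4 G$ with $U_3=U_3(L^{\surd}G)$ via $\theta_3$ (Theorem \ref{U_is_Gr}), the composite $\mu_3\circ\overline{u}$ is the multiplication map $\nu$ into $U_3$: it sends $x\otimes y\otimes z\in L_1^{\otimes 3}$ to the product $xyz$, and $x\otimes y\in L_1\otimes L_2$ (resp.\ $y\otimes x\in L_2\otimes L_1$) to $xy$. Two facts are then immediate. First, $\nu$ is surjective, since $U_3$ is spanned by the Poincar\'e--Birkhoff--Witt monomials, which are products of three basis vectors of $L_1$ or of one basis vector of $L_1$ with one of $L_2$; hence $\mu_3$ is surjective. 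Second, $\overline{u}$ is surjective with $\Ker\overline{u}=p(\Ker u)$ (Lemma \ref{better_approximation}), and $p(\Ker u)\subseteq\Ker\nu$ because $\nu\circ p=\mu_3\circ\overline{u}\circ p=\mu_3\circ\overline{q}_3\circ u$ vanishes on $\Ker u$; hence $\overline{u}$ induces an isomorphism $\Ker\nu\big/p(\Ker u)\xrightarrow{\ \sim\ }\Ker\mu_3$. Everything reduces to identifying this quotient with $L_1\otimes L_2\big/S$, compatibly with $\omega$.

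To get hold of $\Ker\nu$ I would use the exact sequence $0\to L_1\otimes L_2\xrightarrow{\,m\,}U_3\to S^3 L_1\to 0$, obtained as for (\ref{equation}) ($m$ is multiplication and $S^3 L_1=U_3/L_2U_1$). Composing $\nu$ with $U_3\to S^3 L_1$ gives the canonical projection on $L_1^{\otimes 3}$ and kills $L_1\otimes L_2$ and $L_2\otimes L_1$, so $\Ker\nu$ lies over $\Ker(L_1^{\otimes 3}\to S^3 L_1)$; a direct computation then identifies $\Ker\nu$ with the group of triples $\bigl(w,\,a,\,-\mathrm{swap}(\varphi(w)+a)\bigr)$, where $w\in\Ker(L_1^{\otimes 3}\to S^3 L_1)$, $a\in L_1\otimes L_2$, $\mathrm{swap}$ is the flip $L_1\otimes L_2\to L_2\otimes L_1$, and $\varphi$ is determined by $m(\varphi(w))=$ (product of $w$ in $U_3$); explicitly $\varphi(x\otimes y\otimes z-y\otimes x\otimes z)=z\otimes c(x\wedge y)$ and $\varphi(x\otimes y\otimes z-x\otimes z\otimes y)=x\otimes c(y\wedge z)$. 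For $\Ker u$ I would use (\ref{equation}) again: $\Ker(L_1^{\otimes 2}\oplus L_2\to U_2)\cong\Lambda^2 L_1$ via $x\wedge y\mapsto\bigl(l_2(x\wedge y),-c(x\wedge y)\bigr)$, so $\Ker u\cong(L_1\otimes\Lambda^2 L_1)\oplus(\Lambda^2 L_1\otimes L_1)$, and then compute $p$ on generators. In the coordinates above, $p(\Ker u)$ becomes the subgroup generated by the elements $\bigl((\mathbb{1}\otimes l_2)(\zeta),\,-(\mathbb{1}\otimes c)(\zeta)\bigr)$, $\zeta\in L_1\otimes\Lambda^2 L_1$, and $\bigl((l_2\otimes\mathbb{1})(\eta),\,0\bigr)$, $\eta\in\Lambda^2 L_1\otimes L_1$.

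Now I would pass to the quotient. Since $\Ker(L_1^{\otimes 3}\to S^3 L_1)=\Image(l_2\otimes\mathbb{1})+\Image(\mathbb{1}\otimes l_2)$, the second family of generators kills every $(w,0)$ with $w\in\Image(l_2\otimes\mathbb{1})$, while the first allows one to replace $\bigl((\mathbb{1}\otimes l_2)(\zeta),0\bigr)$ by $\bigl(0,(\mathbb{1}\otimes c)(\zeta)\bigr)$; hence $a\mapsto[(0,a)]$ is a surjection $L_1\otimes L_2\twoheadrightarrow\Ker\nu/p(\Ker u)$, and its kernel is $\bigl\{-(\mathbb{1}\otimes c)(\zeta):(\eta,\zeta)\in\Ker\bigl(l_2\otimes\mathbb{1},\,\mathbb{1}\otimes l_2\bigr)\bigr\}$, the map in question being $(l_2\otimes\mathbb{1},\,\mathbb{1}\otimes l_2)\colon(\Lambda^2 L_1\otimes L_1)\oplus(L_1\otimes\Lambda^2 L_1)\to L_1^{\otimes 3}$, $(\eta,\zeta)\mapsto(l_2\otimes\mathbb{1})(\eta)+(\mathbb{1}\otimes l_2)(\zeta)$. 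The crucial identity is
\[\Ker\bigl(l_2\otimes\mathbb{1},\ \mathbb{1}\otimes l_2\bigr)=\Image\bigl(\widehat{J}\colon\Lambda^3 L_1\to(\Lambda^2 L_1\otimes L_1)\oplus(L_1\otimes\Lambda^2 L_1)\bigr),\]
where $\widehat{J}(x\wedge y\wedge z)=\bigl((x\wedge y)\otimes z+(y\wedge z)\otimes x+(z\wedge x)\otimes y,\ -J(x\wedge y\wedge z)\bigr)$ with $J$ the map from Corollary \ref{corol}: indeed $l_2\otimes\mathbb{1}$ applied to the first component and $\mathbb{1}\otimes l_2$ applied to $J(x\wedge y\wedge z)$ both equal the antisymmetrization of $x\otimes y\otimes z$ in $L_1^{\otimes 3}$, so $\Image\widehat{J}$ is contained in the kernel. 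Granting the identity, every $(\eta,\zeta)$ in the kernel equals $\widehat{J}(\xi)$ for some $\xi$, so $-(\mathbb{1}\otimes c)(\zeta)=\bigl((\mathbb{1}\otimes c)\circ J\bigr)(\xi)$, and these range exactly over the generators of $S$; conversely $S$ is contained in the kernel. Thus $\Ker\nu/p(\Ker u)\cong L_1\otimes L_2/S$, and unwinding the identifications one checks that the resulting isomorphism $L_1\otimes L_2/S\to\Ker\mu_3\subseteq F_3/F_4$ sends $x\otimes y+S$ to $\overline{u}(0,x\otimes y,-y\otimes x)=\widetilde{u}\bigl(l_3(x\otimes y)\bigr)$, i.e.\ it is $\omega$ (in particular $\omega$ is well defined on $L_1\otimes L_2/S$). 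Together with surjectivity of $\mu_3$ this is the asserted exact sequence.

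The main obstacle is the displayed identity of kernels, or rather its integral saturation, since a dimension count only shows $\Image\widehat{J}$ has finite index in $\Ker(l_2\otimes\mathbb{1},\,\mathbb{1}\otimes l_2)$. I would settle it by fixing a basis of the free abelian group $L_1$: the defining equation of the kernel forces the coefficient tensor of $(\eta,\zeta)$, regarded in $L_1^{\otimes 3}$, to be antisymmetric in the first two and in the last two tensor slots, which — by a short manipulation involving no division — makes it totally antisymmetric, hence the image of a unique element of $\Lambda^3 L_1$ under the antisymmetrization $\Lambda^3 L_1\to L_1^{\otimes 3}$, and one then reads off directly that $(\eta,\zeta)\in\Image\widehat{J}$. (Alternatively the whole argument can be run one step higher, on $(L_1\otimes U_2)\oplus(U_2\otimes L_1)$ using the sequence $0\to L_2\to U_2\to S^2 L_1\to 0$ from (\ref{equation}), where the analogous point becomes exactness over $\Z$ of the Koszul-type sequence $L_1^{\otimes 3}\to(L_1\otimes S^2 L_1)\oplus(S^2 L_1\otimes L_1)\to S^3 L_1\to 0$.) All the remaining work is diagram-chasing and careful bookkeeping of the several splittings, identifications and signs involved.
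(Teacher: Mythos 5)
Your argument is correct, but it takes a genuinely different route from the paper's. The paper keeps the auxiliary map $t:(L_1\otimes L_2)\oplus(L_2\otimes L_1)\to L_2\otimes L_1$ and the restriction $\widetilde{u}$ of $\overline{u}$, proves surjectivity of $\mu_3$ by first identifying $\Coker\widetilde{u}$ with $S^3L_1$ and then applying the snake lemma to diagram (\ref{QQ}), and obtains $\Ker\mu_3\cong\Ker t\big/\Ker\widetilde{u}$, where the identification $\Ker\widetilde{u}\cong S$ is the content of the separate Lemma \ref{kernels}; that lemma is proved inside the tensor algebra $T(L_1)\cong U(\mathcal{L}(L_1))$, using the Poincar\'e--Birkhoff--Witt decomposition $\mathcal{L}_3\oplus(\mathcal{L}_2\otimes L_1)$ and the presentation $\Ker b_3=\Image J$ of the degree-three part of the free Lie ring. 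You instead work with the full presentation of $F_3/F_4$ from Lemma \ref{better_approximation}, observe that $\theta_3^{-1}\circ\mu_3\circ\overline{u}$ is the multiplication $\nu$ into $U_3$ (which yields surjectivity of $\mu_3$ immediately from PBW, and is legitimate since $\theta$ is a ring isomorphism; note your value $xy$ on $L_2\otimes L_1$ agrees with the natural $yx$ only because $[L_1,L_2]=0$ in $L^{\surd}G$), and you compute $\Ker\nu\big/p(\Ker u)$ in explicit coordinates. The free-Lie-ring input is replaced by the integral identity $\Ker\bigl(l_2\otimes\mathbb{1},\,\mathbb{1}\otimes l_2\bigr)=\Image\widehat{J}$, which you rightly flag as the crux and settle by the antisymmetrization argument: a tensor antisymmetric under both adjacent transpositions is totally antisymmetric, hence is the image of an element of $\Lambda^3L_1$ with no denominators. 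Your route buys a shorter, more self-contained treatment of surjectivity and of the saturation of the Jacobi relations (no free Lie ring, no auxiliary maps $t$, $\overline{t}$), at the cost of heavier explicit bookkeeping on $\Ker\nu$; the paper's route isolates $\Ker\widetilde{u}$ as a reusable statement and leans on standard structure theory of $T(L_1)$. Both ultimately rest on the same two integral facts, PBW for $U_3$ and the Jacobi relations being the full kernel, so the proofs are parallel in substance though not in organization.
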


Before we prove this theorem, we need an additional lemma.
\begin{lemma}\label{kernels}
The map $l_3$ induces an isomorphism $S \cong \Ker \widetilde{u}$, where $S \subseteq L_1 \otimes L_2$ is defined as in the previous proposition.
\end{lemma}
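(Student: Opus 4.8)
The plan is to identify $\Ker \widetilde{u}$ by chasing the exact sequence of Lemma \ref{better_approximation} restricted to the summand $(L_1 \otimes L_2) \oplus (L_2 \otimes L_1)$, and to show that, after composing with $l_3$, the kernel is exactly $S$. First I would unravel the definition of $\overline{u}$ and hence of $\widetilde{u}$: the map $\overline{u}$ on $L_1^{\otimes 3} \oplus (L_1 \otimes L_2) \oplus (L_2 \otimes L_1)$ is the map induced on quotients of the diagram preceding Lemma \ref{better_approximation}, so its kernel is the image under $p$ of $\Ker u$. The map $u$ is induced by the quotient $L_1^{\otimes 2} \oplus L_2 \to U_2$, whose kernel — by the exact sequence \eqref{equation}, rewritten as $0 \to \Lambda^2 L_1 \xrightarrow{(l_2, -c)} L_1^{\otimes 2} \oplus L_2 \to U_2 \to 0$ — is the image of $x \wedge y \mapsto (x\otimes y - y\otimes x,\; -c(x\wedge y))$. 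Therefore $\Ker u$ is spanned by the elements in the $L_1 \otimes (L_1^{\otimes 2}\oplus L_2)$ slot of the form $w \otimes \big(x\otimes y - y\otimes x,\; -c(x\wedge y)\big)$ and the symmetric ones in the other slot.

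Next I would push these generators through $p$. Since $p$ identifies $L_1 \otimes L_1^{\otimes 2}$ with $L_1^{\otimes 2}\otimes L_1$, the element $w \otimes (x\otimes y - y \otimes x, -c(x\wedge y))$ maps to $w\otimes x \otimes y - w \otimes y \otimes x - w \otimes c(x\wedge y)$ in $L_1^{\otimes 3} \oplus (L_1 \otimes L_2)$, and similarly on the right. So $\Ker \overline{u} = p(\Ker u)$ is the subgroup of $L_1^{\otimes 3} \oplus (L_1 \otimes L_2) \oplus (L_2 \otimes L_1)$ generated by all such triples together with their mirror images. Now I restrict attention to elements of $(L_1 \otimes L_2) \oplus (L_2 \otimes L_1)$, i.e. those with vanishing $L_1^{\otimes 3}$ component: a combination of the generators above lies in this subgroup precisely when the $L_1^{\otimes 3}$ parts cancel. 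Taking the combination $w\otimes(x\otimes y - y\otimes x) - $ (the analogous generator with the roles of $w$ and $x$, and $w$ and $y$, permuted) forces exactly the Jacobi-type pattern: the $L_1^{\otimes 3}$ components cancel by the symmetry of $x\otimes y - y\otimes x$, and what survives in $L_1 \otimes L_2$ is $-\big(w\otimes c(x\wedge y) + x \otimes c(y\wedge w) + y \otimes c(w\wedge x)\big)$ up to sign, with a matching term landing in $L_2 \otimes L_1$ via the mirror generators. The map $\widetilde{u}$ is the composite of $\overline{u}$ with inclusion of $(L_1\otimes L_2)\oplus(L_2\otimes L_1)$, so $\Ker\widetilde{u}$ is the intersection of $\Ker\overline{u}$ with this subgroup, which by the preceding is generated by the pairs $\big(x\otimes c(y\wedge z) + \text{cyc},\; -(c(y\wedge z)\otimes x + \text{cyc})\big)$. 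Since $l_3(x\otimes c(y\wedge z)) = (x\otimes c(y\wedge z), -c(y\wedge z)\otimes x)$, summing $l_3$ over the cyclic permutations yields precisely these generators, so $l_3$ carries $S$ onto $\Ker\widetilde{u}$; injectivity of $l_3$ on $L_1\otimes L_2$ (it is split by the first projection) then gives the isomorphism $S \cong \Ker\widetilde{u}$.

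The main obstacle I anticipate is the bookkeeping in the middle step: verifying that an arbitrary element of $\Ker\overline{u}$ whose $L_1^{\otimes 3}$ component vanishes can be rewritten as a combination of the Jacobi generators and nothing more — i.e. that no ``extra'' relations among the $w\otimes(x\otimes y - y\otimes x)$ generators, coming from linear dependencies in $L_1^{\otimes 3}$, contribute elements outside $l_3(S)$. Concretely one must check that the natural map from $\Lambda^2 L_1 \otimes L_1$ (or the relevant Koszul-type piece) down to $L_1^{\otimes 3}$ detects exactly the failure, and that its kernel maps into $S$ under $\mathbb{1}\otimes c$; this is essentially the statement that $S = \Image\big((\mathbb{1}\otimes c)\circ J\big)$ for the Jacobiator $J : \Lambda^3 L_1 \to \Lambda^2 L_1 \otimes L_1$, which is consistent with the first map of the complex \eqref{complex}. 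I would handle this by working with a basis of $L_1$ and writing everything in coordinates, reducing the verification to a finite linear-algebra check, or alternatively by recognizing the whole diagram as a presentation of the degree-3 part of the universal enveloping algebra modulo symmetric tensors and invoking the PBW-type decomposition from Section \ref{two} directly.
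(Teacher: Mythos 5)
Your reduction of the problem is the same as the paper's: you compute $\Ker u$ from the sequence \eqref{equation}, push it through $p$ to obtain $\Ker\overline{u}=p(\Ker u)$, and recognize that $\Ker\widetilde{u}=\Ker\overline{u}\cap\bigl((L_1\otimes L_2)\oplus(L_2\otimes L_1)\bigr)$; you also correctly verify that the Jacobi elements $l_3\bigl(x\otimes c(y\wedge z)+y\otimes c(z\wedge x)+z\otimes c(x\wedge y)\bigr)$ lie in this intersection, so $l_3(S)\subseteq\Ker\widetilde{u}$. The gap is the reverse inclusion, which you yourself flag as the ``main obstacle'' and then defer. Concretely, writing a general element of $\Ker\overline{u}$ as the image of $(x,y)\in(L_1\otimes\Lambda^2L_1)\oplus(\Lambda^2L_1\otimes L_1)$, the vanishing of its $L_1^{\otimes3}$-component reads $(\mathbb{1}\otimes l_2)(x)=(l_2\otimes\mathbb{1})(y)$, and one must prove that this forces $y=tw(x)$ and $x\in\Image J$. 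Neither of your proposed remedies actually supplies this: a coordinate computation is not a finite check (the rank of $L_1$ is arbitrary, so you would in effect be re-deriving the structure of $L_1^{\otimes3}$ over $\Z$), and the Poincar\'e--Birkhoff--Witt decomposition of $U(L^{\surd}G)$ from Section~\ref{two} is not the relevant one here.

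The paper closes this gap with a different application of PBW: it identifies the tensor algebra $T(L_1)$ with the enveloping algebra of the \emph{free} Lie ring $\mathcal{L}(L_1)$, so that $\mathcal{L}_3\oplus(\mathcal{L}_2\otimes L_1)$ sits as a direct sum inside $L_1^{\otimes3}$. Decomposing $\mathbb{1}\otimes l_2=b_3+(l_2\otimes\mathbb{1})\circ tw$, where $b_3$ is the triple-bracket map into $\mathcal{L}_3$, the directness of that sum splits the equation above into $b_3(x)=0$ and $(l_2\otimes\mathbb{1})(tw(x))=(l_2\otimes\mathbb{1})(y)$; injectivity of $l_2\otimes\mathbb{1}$ then gives $y=tw(x)$, and the fact that $\Ker b_3=\Image J$ (the Jacobi identity is the only relation among triple brackets in a free Lie ring) gives $x\in\Image J$, hence the surviving $(L_1\otimes L_2)\oplus(L_2\otimes L_1)$-component is exactly $l_3\bigl((\mathbb{1}\otimes c)(J(\Lambda^3L_1))\bigr)=l_3(S)$. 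This free-Lie-ring argument, or an equivalent explicit splitting of $L_1^{\otimes3}$, is the missing ingredient; without it the claim that no ``extra'' elements occur in $\Ker\widetilde{u}$ remains unproved.
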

\begin{proof}
The kernel of $L_1^{\otimes 2} \oplus L_2 \to U_2$ is given by the image of the map $\Lambda^2 L_1 \to L_1^{\otimes 2} \oplus L_2$, mapping an element $x$ to $(l_2(x), -c(x))$. It follows that $p(\Ker u)$ is given by the image of the map
\[\begin{array}{rcl}(L_1 \otimes \Lambda^2 L_1) \oplus (\Lambda^2 L_1\otimes L_1) & \longrightarrow & L_1^{\otimes 3} \oplus (L_1 \otimes L_2) \oplus (L_2 \otimes L_1),\end{array}\]
mapping $(x, 0)$ to $(-(\mathbb{1} \otimes l_2)(x), (\mathbb{1} \otimes c)(x), 0)$ and $(0, y)$ to $( (l_2 \otimes \mathbb{1})(y),0, -(c \otimes \mathbb{1})(y))$. By Lemma \ref{better_approximation}, this image is precisely the kernel of $\overline{u}$. 
To find the kernel of the restriction $\widetilde{u}$, we want to know $p(\Ker u) \cap \big((L_1 \otimes L_2) \oplus (L_2 \otimes L_1)\big)$, so we need to know for which elements $x \in L_1 \otimes \Lambda^2L_1$ and $y \in \Lambda^2 L_1 \otimes L_1$ it is true that $(\mathbb{1} \otimes l_2)(x)=(l_2 \otimes \mathbb{1})(y)$. 

It is well-known that the tensor algebra $T(L_1)$ is isomorphic to the universal enveloping algebra of $\mathcal{L}(L_1)$, the free Lie ring on $L_1$. (This is easily verified using the universal properties.) Let $\mathcal{L}_2$ and $\mathcal{L}_3$ be the subgroups of $\mathcal{L}(L_1)$ consisting of respectively the two- and three-fold brackets of elements of $L_1$ in $\mathcal{L}(L_1)$. By the Poincar\'{e}-Birkhoff-Witt decomposition, $\mathcal{L}_3 \oplus (\mathcal{L}_2 \otimes L_1 )\subseteq L_1^{\otimes 3}$. 
Define $tw: L_1 \otimes \Lambda^2L_1 \to \Lambda^2 L_1 \otimes L_1$ as the isomorphism taking a basis element $x \otimes (y\wedge z)$ to $(y \wedge z) \otimes x$ and $b_3 : L_1 \otimes \Lambda^2L_1 \to \mathcal{L}_3$ as the map taking a basis element $x \otimes (y \wedge z)$ to $x \otimes l_2(y \wedge z) - l_2(y \wedge z) \otimes x $ (so $b_3$ is the three-fold commutator map in the free Lie algebra). Now $l_2 \otimes \mathbb{1}$ has image in $\mathcal{L}_2 \otimes L_1$ and $\mathbb{1} \otimes l_2$ equals $b_3+(l_2 \otimes \mathbb{1}) \circ tw$ and therefore has image in $\mathcal{L}_3 \oplus (\mathcal{L}_2 \otimes L_1)$. Since the latter is a direct sum, it follows that for $x, \, y$ with $(\mathbb{1} \otimes l_2)(x)=(l_2 \otimes \mathbb{1})(y)$, we must have $x \in \Ker b_3$ and $(l_2 \otimes \mathbb{1})(tw(x))=(l_2 \otimes \mathbb{1})(y)$. Furthermore, injectivity of $l_2$ implies injectivity of $l_2 \otimes \mathbb{1}$, so $y = tw(x)$. It follows that $\Ker \widetilde{u} \subseteq (L_1 \otimes L_2) \oplus (L_2 \otimes L_1)$ consists of all elements $\big((\mathbb{1}\otimes c)(x),-(c \otimes \mathbb{1})(tw(x)) \big)=l_3((\mathbb{1} \otimes c)(x))$ with $x \in \Ker b_3$. Since $b_3$ is the three-fold commutator map in the free Lie algebra, the kernel of $b_3$ is given by the image of the map $J : \Lambda^3 L_1\to  L_1 \otimes \Lambda^2 L_1$ encoding the Jacobi identity, that maps a basis element $x \wedge y \wedge z$ to $x \otimes (y\wedge  z) + y \otimes ( z \wedge x) + z \otimes (x \wedge y )$. It follows that $l_3$ gives an isomorphism $\Image ((\mathbb{1} \otimes c)\circ J) \cong \Ker \widetilde{u}$, and $\Image ((\mathbb{1} \otimes c)\circ J)$ is precisely $S$.
\end{proof}

Now we can prove Proposition \ref{third}.
\begin{proof}
Since $\theta_3 : U_3 \to \paug^3 G / \paug^4 G$ is an isomorphism, we study the kernel and cokernel of the composition $\theta^{-1}_3 \circ \mu_3$.
Define a map $t : (L_1 \otimes L_2) \oplus (L_2 \otimes L_1) \to L_2 \otimes L_1$ by $t(x \otimes y , x'\otimes y')=y \otimes x + x'\otimes y'$.  
There is a diagram
\begin{equation}\label{QQ}\xymatrix{  & (L_1 \otimes L_2) \oplus (L_2 \otimes L_1) \ar[d]^{t} \ar[r]^-{\widetilde{u}} & F_3 / F_4  \ar[d]^{\theta_3^{-1} \circ \mu_3} \ar[r] & Q \ar[d] \ar[r] & 0\\
0 \ar[r] & L_2 \otimes L_1 \ar[r] & U_3 \ar[r] & S^3 L_1 \ar[r] & 0}\end{equation}
with exact rows, where $Q$ is defined as the cokernel of $\widetilde{u}$. 
We show that the left hand side of the diagram commutes. For an element of $L_2 \otimes L_1$, commutativity is straight-forward. Now observe that $\widetilde{u}$ equals the composition  $(L_1 \otimes L_2) \oplus (L_2 \otimes L_1) \to (L_1 \otimes U_2) \oplus (U_2 \otimes L_1) \overset{q_{3,n}}{\to} \overline{F}_{3,n} \big/ \overline{F}_{4,n}$. 
% % % % % % % % % % % % % % % % %
If we take elements $x \in G$ and $y \in \sqrt{\gamma_2(G)}$, the image of the element $\overline{x} \otimes \overline{y} \in L_1 \otimes L_2$ under $\mu_3 \circ \widetilde{u}$ equals the class of $p_n^{\surd}(x) p_n^{\surd}(y)$ in $I_{\surd}^{3}G \big/ I_{\surd}^4 G$, so its inverse image under $\theta_3$ is $\mu_U(\overline{x} \otimes \overline{y}) \in U_3$. Using the construction of the enveloping algebra, we know that $\mu_U(\overline{x} \otimes \overline{y}) = \mu_U(\overline{y} \otimes \overline{x}) + c_{12}(x \otimes y)$ in $U_3$. 
% % % % % % % % % % % % % % % %
This shows that the left hand side of the diagram is commutative for all elements of $L_1 \otimes L_2$. The vertical map $Q \to S^3 L_1$ is now induced by $\theta_3^{-1} \circ \mu_3$, so the right hand side of the diagram commutes by definition. 

Take the projection $pr_1 : L_1^{\otimes 3} \oplus (L_1 \otimes L_2) \oplus (L_2 \otimes L_1) \rightarrow L_1^{\otimes 3}$. It follows from Lemma \ref{better_approximation} that there is a surjective map $L_1^{\otimes 3} \rightarrow Q$ such that the diagram 
\[\xymatrix{L_1^{\otimes 3} \oplus (L_1 \otimes L_2) \oplus (L_2 \otimes L_1) \ar[r]^-{\overline{u}} \ar[d]^{pr_1} & F_3 / F_4 \ar[d] \\ L_1^{\otimes 3} \ar[r] & Q}\] commutes. It follows from the same lemma that the composition 
\[\xymatrix{\Ker u \ar[r]^-{pr_1 \circ p} \ar[r] & L_1^{\otimes 3} \ar[r] & Q}\] is trivial. Therefore there is a surjective map $\Coker (pr_1 \circ p) \rightarrow Q$. If we look at the description of $p(\Ker u)$ in the proof of Lemma \ref{kernels}, it is clear that $\Coker (pr_1 \circ p) \cong S^3 L_1$. If we post-compose this map $S^3 L_1 \to Q$ with the map $Q \rightarrow S^3 L_1$ of the diagram, we get the identical map. This shows that the map $S^3 L_1 \to Q$ is injective, and is therefore an isomorphism, and so is the vertical map at the right hand side in diagram (\ref{QQ}). 
It is easy to see that the map $t$ is surjective, so it follows from the snake lemma that $\mu_3$ is surjective too. 

If we take the map $\overline{t} : (L_1 \otimes L_2) \oplus (L_2 \otimes L_1) \Big/ \Ker \widetilde{u} \rightarrow L_2 \otimes L_1$ induced by $t$, the snake lemma shows that $\Ker \mu_3 \cong \Ker \overline{t}$. It is clear that $\Ker \overline{t} \cong \Ker t \big/ (\Ker t \cap \Ker \widetilde{u})$, while $\Ker t = l_3(L_1 \otimes L_2)$ with $l_3(x \otimes y)=(x \otimes y, -y \otimes x)$ as before. It is clear from the first diagram that $\Ker \widetilde{u} \subseteq \Ker t$, so it follows from Lemma \ref{kernels} that $l_3$ induces an isomorphism $L_1 \otimes L_2 \big/ S \cong \Ker \overline{t}$. This finishes the proof.
\end{proof}

Finally, we can prove Theorem \ref{cohom_two_step}. 
\begin{proof}
By Theorem \ref{general_formula}, we know that $H^2(G,M) \cong \Coker \widetilde{\mu}^*$. Furthermore, since $F_4$ is torsion (Lemma \ref{torsion}) and $M$ is torsion-free,  $\Hom_{\Z}(\ptensor,M) \cong \Hom_{\Z}(\ptensor / F_4, M)$. As $\widetilde{\mu}(F_4) \subset F_4' = 0$, we can work with the quotient map $\overline{\mu} : \ptensor \big/ F_4 \rightarrow \pthree (G)$ instead of with $\widetilde{\mu}$. Since both $F_2 / F_3$ and $\polytwo (G)$ are free abelian, the rows of diagram (\ref{first_diagram}) on page \pageref{first_diagram} are split exact. We apply $\Hom_{\Z}(-,M)$ to the diagram, and get a new diagram
\[\xymatrix{0 \ar[r] & \Hom_{\Z}(\polytwo(G) , M)\ar[d]^{\mu_2^*} \ar[r] & \Hom_{\Z}(\polythree(G), M) \ar[r] \ar[d]^{\overline{\mu}^*} & \Hom_{\Z}(I^3_{\surd}G / I^4_{\surd}G, M) \ar[d]^{\mu_3^*} \ar[r] & 0\\0 \ar[r] & \Hom_{\Z}(F_2 / F_3 , M) \ar[r] & \Hom_{\Z}(F_2 / F_4, M) \ar[r] & \Hom_{\Z}(F_3 / F_4, M) \ar[r] & 0}\] with exact rows. 
Since $U_3$ is free abelian, the exact sequence of Proposition \ref{third} is split. It follows that $\mu_3^*$ is injective and has cokernel $\Hom_{\Z}(L_1 \otimes L_2 \big/ S , M)$. Furthermore, by Proposition \ref{second}, the cokernel of $\mu_2^*$ is isomorphic to $\Coker c^*$. It now follows from the snake lemma that there is an exact sequence 
\[\xymatrix{0 \ar[r] &  \Coker c^* \ar[r] & \Coker \overline{\mu}^* \ar[r] & \Hom_{\Z}(L_1 \otimes L_2 \big/ S,M) \ar[r] & 0.}\]
As both the sequence in Proposition \ref{third} and the sequence $\xymatrix@1{F_3 / F_4 \ar[r] & F_2 / F_4 \ar[r] & F_2 / F_3}$ are split exact, this sequence is also split exact, and the theorem follows. 
 \end{proof}

\begin{remark}
Using the techniques of the proof of Proposition \ref{second}, it is easy to see that $\Ker \mu_2 \cong \Ker c$. Furthermore, it follows from diagram (\ref{first_diagram}) on page \pageref{first_diagram}, surjectivity of $\mu_3$ (Proposition \ref{third}) and the fact that $\Ker c$ is free abelian that $\Ker \overline{\mu} \cong \Ker c \oplus (L_1 \otimes L_2) \big/ S$ by the snake lemma. This is important since $\overline{\tau}\Ker \widetilde{\mu}$ ($=\overline{\tau} \Ker \overline{\mu}$) is isomorphic to $\overline{\tau} H_2(G)$ (Definition 2.3.11 and Lemma 2.3.8 (i) in \cite{hart91-1}).
Here $\overline{\tau} A$ stands for $A / \tau A$, the quotient of an abelian group $A$ by its torsion subgroup. 
\end{remark}

\begin{example}
Take an integer $n > 1$ and the group $G$ generated by elements $x_i$ for $1 \leq i \leq n$, elements $y_i$ for $1 \leq i \leq n$ and the element $z$, where $[x_i, y_i]=z^{d_i}$ with $d_1 | d_2 | \cdots | d_n$ and the other pairs of generators commute. It is clear that $L_2$ is the free abelian group generated by $z$ and $L_1$ has a basis consisting of the classes $\overline{x_i}$ and $\overline{y_j}$. The map $c : \Lambda^2 L_1 \to L_2$ maps the basis elements $\overline{x_i} \wedge \overline{y_i}$ to $d_i z $ and maps the other basis elements to zero, so it follows that $\Coker c \cong \Z_{d_1}$ and $\Ker c \cong \Z^{\binom{2n}{2}-1}$. Furthermore, one can check that $d_1 (x_i \otimes z) = x_i \otimes c(x_1 \wedge y_1) \in S$ for $1 < i \leq n$ and $d_2 (x_1 \otimes z)=x_1 \otimes c(x_2 \otimes y_2) \in S$, and similarly, $d_1 (y_i \otimes z)\in S$ for $1 < i \leq n$ and $d_2 (y_1 \otimes z) \in S$. This means that $(L_1 \otimes L_2) \big/ S$ is torsion, so the second formula of Theorem \ref{cohom_two_step} yields 
$H^2(G, \Z) \cong \Z^{\binom{2n}{2}-1} \oplus \Z_{d_1}$.

\end{example}

% % % % % % % % % % % % % % % % % % % % % % % % % % % % % % % % % % % % % % % % % % % %
\section{Cocycles, or a concrete isomorphism}\label{four}
Let $G$ be a two-step nilpotent $\mathcal{T}$-group $G$ and fix a Mal'cev basis $\{x_1, \ldots, \, x_n, y_1, \ldots, \, y_m\}$ for $G$, i.e., the classes $\overline{x}_i$ in the free abelian group $L_1=G / \sqrt{\gamma_2(G)}$ form a basis of $L_1$, whereas the elements $y_j$ form a basis of the abelian group $L_2=\sqrt{\gamma_2(G)}$. For uniformity, we will write $\overline{y}_i$ if we consider $y_i$ as an element in $L_2$. Let $M$ be a trivial $G$-module that is torsion-free and finitely generated as an abelian group. In this section, we will make the first isomorphism of Theorem \ref{cohom_two_step} concrete, in other words, we will explicitly find a (polynomial) cocycle corresponding to a given element of $\Coker c^* \oplus \Hom (L_1 \otimes L_2 \big/ S, M)$ (Theorem \ref{lemmax} and Theorem \ref{lemmay}). %Since we understand the isomorphism of Theorem \ref{general_formula}, i
By Theorem \ref{general_formula} and the discussion in the proof of Theorem \ref{cohom_two_step}, we know that an element $f : F_2 / F_4 \to M$ representing an element of $\Coker \overline{\mu}$ gives rise to a cocycle $G \times G \to \ptensor = F_2 \twoheadrightarrow F_2 / F_4 \overset{f}{\to} M$, where the first map takes a couple $(g,g')$ to $p_2^{\surd}(g) \otimes p_2^{\surd}(g')$. Hence it suffices to make the isomorphism $\Coker \overline{\mu} \cong \Coker c^* \oplus \Hom (L_1 \otimes L_2 \big/ S,M)$ explicit.

Take two elements 
\[g=x_1^{a_1} x_2^{a_2}\ldots x_n^{a_n} y_1^{b_1} \ldots y_m^{b_m}\] and \[g'=x_1^{a_1'} x_2^{a_2'}\ldots x_n^{a_n'} y_1^{b_1'} \ldots y_m^{b_m'}\] in $G$. For an explicit description of the cocycles, we will need to describe $p_2^{\surd}(g)$ and $p_2^{\surd}(g')$ in terms of $p_2^{\surd}(x_i)$ and $p_2^{\surd}(y_j)$. 
\begin{lemma}\label{tricklemma}
If $g \in G$ is given as a word in the Mal'cev base of $G$ as before, we can write \[p_2^{\surd}(g)=\sum_{i=1}^{n}a_i p_2^{\surd}(x_i) + \sum_{j=1}^{m}b_j p_2^{\surd}(y_j) + \sum_{i=1}^{n} \binom{a_i}{2} p_2^{\surd}(x_i)^2 \] \[{\phantom{p_2^{\surd}(g)=\sum_{i=1}^na_i p_2^{\surd}(x_i)}}+ \sum_{1 \leq i< j \leq n} a_i a_j p_2^{\surd}(x_i)p_2^{\surd}(x_j) .\] 
\end{lemma}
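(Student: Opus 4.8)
The plan is to use that $p_2^{\surd}\colon G\to P_2^{\surd}(G)$ is a derivation and to exploit the (partially defined) multiplication that $P_2^{\surd}(G)=IG/\paug^3G$ inherits from $\Z G$. First I would record the facts needed. The product $P_2^{\surd}(G)\times P_2^{\surd}(G)\to P_2^{\surd}(G)$ induced by that of $\Z G$ is well defined, because replacing a representative by an element of $\paug^3G$ changes a product only by an element of $\paug^4G\subseteq\paug^3G$; its image lies in $I^2G/\paug^3G$, and every product of three elements is zero, since $(g_1-1)(g_2-1)(g_3-1)\in I^3G\subseteq\paug^3G$. From the identity $gh-1=(g-1)+(h-1)+(g-1)(h-1)$ in $\Z G$ one obtains the Leibniz rule
\[p_2^{\surd}(gh)=p_2^{\surd}(g)+p_2^{\surd}(h)+p_2^{\surd}(g)\,p_2^{\surd}(h).\]
Moreover, Lemma \ref{description_surd} shows that for $y\in\sqrt{\gamma_2(G)}=L_2$ and any $g\in G$ the element $(g-1)(y-1)$ has weight $1+2\geq 3$, hence lies in $\paug^3G$; thus $p_2^{\surd}(g)\,p_2^{\surd}(y)=0$ in $P_2^{\surd}(G)$ as soon as $y\in L_2$.

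Next I would compute on one-generator powers. For $x$ with $\overline{x}\in L_1$, expanding $x^a=(1+(x-1))^a$ and reducing modulo $I^3G$ --- an identity valid for every $a\in\Z$ (for $a\geq0$ it is the finite binomial expansion, and the case $a<0$ reduces to it using $x^{-1}\equiv 1-(x-1)+(x-1)^2$) --- gives $p_2^{\surd}(x^a)=a\,p_2^{\surd}(x)+\binom{a}{2}p_2^{\surd}(x)^2$. The same expansion applied to $y\in L_2$ gives $p_2^{\surd}(y^b)=b\,p_2^{\surd}(y)$, the quadratic term vanishing because $(y-1)^2$ has weight $4$. Then, writing $g=w_1\cdots w_{n+m}$ with $w_i=x_i^{a_i}$ for $i\leq n$ and $w_{n+j}=y_j^{b_j}$ for $j\leq m$, a straightforward induction on the number of factors --- using the vanishing of triple products --- promotes the Leibniz rule to
\[p_2^{\surd}(g)=\sum_{k}p_2^{\surd}(w_k)+\sum_{k<l}p_2^{\surd}(w_k)\,p_2^{\surd}(w_l).\]

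Finally I would substitute the two one-generator formulas into the last display. The linear terms contribute $\sum_i a_i\,p_2^{\surd}(x_i)+\sum_j b_j\,p_2^{\surd}(y_j)$, and the squares contribute $\sum_i\binom{a_i}{2}p_2^{\surd}(x_i)^2$; in the cross terms $p_2^{\surd}(w_k)p_2^{\surd}(w_l)$ with $k<l$, every product involving some $y_j$ vanishes by the vanishing established for $L_2$-factors, while $p_2^{\surd}(x_i^{a_i})p_2^{\surd}(x_{i'}^{a_{i'}})=a_i a_{i'}\,p_2^{\surd}(x_i)\,p_2^{\surd}(x_{i'})$ for $i<i'$ because all remaining contributions carry a square factor and are thus triple products. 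Collecting terms yields exactly the claimed identity. I do not expect a genuine obstacle here: the proof is pure bookkeeping, and the only structural ingredient is the weight estimate of Lemma \ref{description_surd}, which is precisely what annihilates every monomial in the $p_2^{\surd}(\cdot)$ that either has degree $\geq 3$ or contains a factor coming from $L_2$; the rest is the combinatorics of the binomial and Leibniz expansions.
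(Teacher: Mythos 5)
Your proposal is correct and follows essentially the same route as the paper's proof: the Leibniz rule from $gh-1=(g-1)+(h-1)+(g-1)(h-1)$, the vanishing of triple products and of any product involving a factor from $\sqrt{\gamma_2(G)}$ via Lemma \ref{description_surd}, the binomial computation of $p_2^{\surd}(x^a)$ with the negative-exponent case handled through $x^{-1}-1\equiv -(x-1)+(x-1)^2$, and the final collection of terms. The only difference is cosmetic (you run one uniform Leibniz induction over all factors and then discard the $L_2$-cross-terms, where the paper first splits off the $y$-part), so nothing further is needed.
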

\begin{proof}
We use a few techniques to prove this equality. It is clear that \begin{equation}\label{product_rule}p_2^{\surd}(ab)=p_2^{\surd}(a)p_2^{\surd}(b) + p_2^{\surd}(a) + p_2^{\surd}(b)\end{equation} for all $a, \, b \in G$. By Lemma \ref{description_surd}, $y_1^{b_1} \ldots y_m^{b_m} - 1 \in I^2_{\surd}G$, so $p_2^{\surd}(g)=p_2^{\surd}(x_1^{a_1} x_2^{a_2}\ldots x_n^{a_n})+p_2^{\surd}(y_1^{b_1} \ldots y_m^{b_m})$ since the product of the terms will vanish in $\polytwo (G)$. For the same reason, $p_2^{\surd}(y_1^{b_1} \ldots y_m^{b_m})=p_2^{\surd}(y_1^{b_1})+ \ldots + p_2^{\surd}(y_m^{b_m})$. Furthermore, since a three-fold product of elements of the form $p_2^{\surd}(x_i)$ vanish in $\polytwo (G)$, we see by induction that $p_2^{\surd}(x_1^{a_1} \ldots x_n^{a_n})=\sum_{i=1}^{n}p_2^{\surd}(x_i^{a_i}) + \sum_{1 \leq i < j \leq n} p_2^{\surd}(x_i^{a_i})p_2^{\surd}(x_j^{a_j})$. If $a_i > 0$ or $b_j > 0$, we can use the same tricks to see that $p_2^{\surd}(x_i^{a_i})=a_i p_2^{\surd}(x_i) + \binom{a_i}{2} p_2^{\surd}(x_i)^2$ and that $p_2^{\surd}(y_j^{b_j})=b_j p_2^{\surd}(y_j)$. It remains to see what happens for $a_i < 0$ and $b_j < 0$. Observe that, for $a \in G$, we can write $a^{-1}-1=((a-1)+1)^{-1}-1=\sum_{i=1}^{\infty} (-1)^i(a-1)^i$, where the last sum is a formal sum in $\Z G$. It follows that $p_2^{\surd}(a^{-1})=-p_2^{\surd}(a)+p_2^{\surd}(a)^2$ in $\polytwo (G)$. If we write $x_i^{a_i}=(x_i^{-1})^{-a_i}$ and use the aforementioned formulas, we get $p_2^{\surd}(x_i^{a_i})=a_i p_2^{\surd}(x_i) + \big(- a_i+ \binom{-a_i}{2}\big) p_2^{\surd}(x_i)^2$ for $a_i < 0$, and $(-a_i+\binom{-a_i}{2})=\binom{a_i}{2}$, so $p_2^{\surd}(x_i^{a_i})=a_i p_2^{\surd}(x_i) + \binom{a_i}{2}p_2^{\surd}(x_i)^2$ for all $a_i \in \Z$. 
% % % % % % % % % % % %
We have $p_2^{\surd}(y_j^{b_j})=b_j p_2^{\surd}(y_j)$ for $b_j < 0$ and  $p_2^{\surd}(x_i^{a_i})p_2^{\surd}(x_j^{a_j})=a_i a_j p_2^{\surd}(x_i)p_2^{\surd}(x_j)$ for $a_i < 0$ and/or $a_j < 0$. It follows that $p_2^{\surd}(x_1^{a_1} \ldots x_n^{a_n})=\sum_{i=1}^{n}a_i p_2^{\surd}(x_i)+\sum_{i=1}^n \binom{a_i}{2}p_2^{\surd}(x_i)^2+ \sum_{1 \leq i < j \leq n} a_i a_j p_2^{\surd}(x_i)p_2^{\surd}(x_j)$, and $p_2^{\surd}(x_y^{b_1} \ldots y_m^{b_m})=\sum_{j=1}^{m}b_j p_2^{\surd}(y_j)$, and this proves the formula.
\end{proof}

We first consider the cocycles coming from elements of $\Coker c^*$. Take a homomorphism $f \in \Hom(\Lambda^2 L_1, M)$ representing an element $[f] \in \Coker c^*$. 
We first have to make the isomorphism $\Coker c^* \overset{\cong}{\to} \Coker \mu_2^*$ of Proposition \ref{second} explicit.
By investigating the proof of the theorem, it becomes clear that we need a retraction of $l_2: \Lambda^2 L_1 \to L_1^{\otimes 2}$ to find a section of $\Hom_{\Z} (L_1 \otimes L_1,M) \to \Hom_{\Z} (\Lambda^2 L_1, M)$ that will induce the isomorphism. 
We know that the classes $\overline{x}_i$ in $L_1$ form a basis of $L_1$. Now $L_1 \otimes L_1$ has a basis consisting of all $\overline{x}_i \otimes \overline{x}_j$, while $S^2 L_1$ has a basis consisting of all $\overline{x}_i \hat{\otimes} \overline{x}_j$ with $i \leq j$. It is clear that we can define a splitting that maps $\overline{x}_i \hat{\otimes} \overline{x}_j$ to $\overline{x}_i \otimes \overline{x}_j$ for all $\overline{x}_i$ and $\overline{x}_j$ with $i \leq j$. The corresponding retraction $r : L_1^{\otimes 2} \to \Lambda^2 L_1$ of $l_2$ maps $\overline{x}_i \otimes \overline{x}_j$ to 0 if $i \leq j$ and to $\overline{x}_i \wedge \overline{x}_j = -\overline{x}_j \wedge \overline{x}_i$ for $i > j$. It follows that $f$ gives rise to a representing element 
\[\xymatrix{F_2 / F_3 \ar[r]^-{\cong} & L_1 ^{\otimes 2} \ar[r]^-{r} & \Lambda^2 L_1 \ar[r]^-{f} & M}\] of $\Coker \mu_2^*$. By examining the proof of Theorem \ref{cohom_two_step}, we see that the injection $\Coker \mu_2^* \to \Coker \overline{\mu}^*$ is simply pre-composition with the quotient map $F_2 / F_4 \twoheadrightarrow F_2 / F_3$. It follows that $f$ gives rise to a cocycle \[ \xymatrix{G \times G \ar[r] & \ptensor = F_2\ar@{->>}[r] & F_2 / F_4 \ar@{->>}[r] &  F_2 / F_3 \ar[r]^-{\cong} & L_1 ^{\otimes 2} \ar[r]^-{r} & \Lambda^2 L_1 \ar[r]^-{f} & M.}\] 
Explicitly, if $g$ and $g'$ are expressed in terms of the Mal'cev basis as before, we want to know $p_2^{\surd}(g)$ and $p^{\surd}_2(g')$ in terms of $p_2^{\surd}(x_i)$ and $p_2^{\surd}(y_j)$. Before doing so, observe that the cocycle associated to $[f]$ will ignore all terms that lie in $F_3$. Remember that the isomorphism $F_2 / F_3 \overset{\cong}{\to} L_1^{\otimes 2}$ takes an element of the form $p_2^{\surd}(x_i) \otimes p_2^{\surd}(x_j)$ to $\overline{x}_i \otimes \overline{x}_j$. In addition, $r$ only detects tensor products $\overline{x}_i \otimes \overline{x}_j$ with $i > j$. We use Lemma \ref{tricklemma} to write the terms of $p_2^{\surd}(g) \otimes p_2^{\surd}(g')$ that contribute to the cocycle, i.e.
\[p_2^{\surd}(g) \otimes p_2^{\surd}(g') = \sum_{1 \leq j < i \leq n} a_i a_j' p_2(x_i) \otimes p_2(x_j) + \ldots.\] 
This proves the following theorem.
\begin{theorem}\label{lemmax}
An element $[f] \in \Coker c^*$ gives rise to the class of the cocycle 
\[(g,g') \mapsto \; \; \ - \! \! \! \! \sum_{1 \leq i < j \leq n} a_j a_i' f(\overline{x}_i \wedge \overline{x}_j) \in M,\] where $g$ and $g'$ are expressed as words in the Mal'cev base of $G$ as before. 
\end{theorem}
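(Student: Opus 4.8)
The plan is to unwind, on the level of explicit representatives, the composition of isomorphisms that produced the summand $\Coker c^*$ of $H^2(G,M)$ in Theorem~\ref{cohom_two_step}, and then to evaluate the resulting $2$-cocycle on a pair $(g,g')$ written in the Mal'cev basis. By Theorem~\ref{general_formula} together with the proof of Theorem~\ref{cohom_two_step}, any $\Z$-linear map $F_2/F_4\to M$ representing a class of $\Coker\overline{\mu}^*$ yields a cocycle via
\[
(g,g')\;\longmapsto\; p_2^{\surd}(g)\otimes p_2^{\surd}(g')\in\ptensor=F_2\;\twoheadrightarrow\;F_2/F_4\;\longrightarrow\;M,
\]
so it suffices to describe which map $F_2/F_4\to M$ corresponds to a given $f\in\Hom_{\Z}(\Lambda^2L_1,M)$ and then to compute the composite.

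First I would pin down the two identifications. Because $L_1^{\otimes 2}\cong F_2/F_3$ by Lemma~\ref{approximations} and the rows of diagram~(\ref{first_diagram}) are split exact, the injection $\Coker\mu_2^*\hookrightarrow\Coker\overline{\mu}^*$ is precomposition with the projection $F_2/F_4\twoheadrightarrow F_2/F_3$. For the isomorphism $\Coker c^*\cong\Coker\mu_2^*$ of Proposition~\ref{second} I would use the splitting of $L_1^{\otimes 2}\twoheadrightarrow S^2L_1$ determined by the ordered basis $\{\overline{x}_i\}$; the retraction $r:L_1^{\otimes 2}\to\Lambda^2L_1$ of $l_2$ dual to it sends $\overline{x}_i\otimes\overline{x}_j$ to $0$ if $i\le j$ and to $\overline{x}_i\wedge\overline{x}_j$ if $i>j$. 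Thus $[f]$ is represented by the composite $F_2/F_4\twoheadrightarrow F_2/F_3\cong L_1^{\otimes 2}\xrightarrow{\,r\,}\Lambda^2L_1\xrightarrow{\,f\,}M$, and the associated cocycle is obtained by precomposing it with $(g,g')\mapsto p_2^{\surd}(g)\otimes p_2^{\surd}(g')$.

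The computation then runs as follows. Expanding $p_2^{\surd}(g)$ and $p_2^{\surd}(g')$ with Lemma~\ref{tricklemma}, every summand except $\sum_i a_ip_2^{\surd}(x_i)$ lies in the second filtration part $I_{\surd}^2G/I_{\surd}^3G$ of $\polytwo(G)$ — the terms $p_2^{\surd}(y_j)$ because $y_j\in\sqrt{\gamma_2(G)}$, and the quadratic terms $p_2^{\surd}(x_i)^2$, $p_2^{\surd}(x_i)p_2^{\surd}(x_j)$ because they are products of two augmentation-degree-one elements. Hence any contribution to $p_2^{\surd}(g)\otimes p_2^{\surd}(g')$ containing such a factor already lies in $F_3$, so modulo $F_3$ only $\bigl(\sum_i a_ip_2^{\surd}(x_i)\bigr)\otimes\bigl(\sum_k a_k'p_2^{\surd}(x_k)\bigr)$ survives, which under $F_2/F_3\cong L_1^{\otimes 2}$ becomes $\sum_{i,k}a_ia_k'\,\overline{x}_i\otimes\overline{x}_k$. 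Applying $r$ kills the summands with $i\le k$ and leaves $\sum_{i>k}a_ia_k'\,\overline{x}_i\wedge\overline{x}_k$, and applying $f$, using antisymmetry $f(\overline{x}_i\wedge\overline{x}_k)=-f(\overline{x}_k\wedge\overline{x}_i)$ and relabelling the pair $(i,k)$ as $(j,i)$ with $i<j$, gives exactly $-\sum_{1\le i<j\le n}a_ja_i'\,f(\overline{x}_i\wedge\overline{x}_j)$, which is the claimed formula.

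The only genuinely delicate point is the degree bookkeeping in the penultimate step: one must identify precisely which products appearing in the Lemma~\ref{tricklemma} expansion survive the projection onto $F_2/F_3\cong L_1\otimes L_1$ (recalling that although the tensor product in $\ptensor$ is taken over $G$, the identification of this filtration quotient with $L_1\otimes_{\Z}L_1$ is the one of Lemma~\ref{approximations}, which sends $p_2^{\surd}(x_i)\otimes p_2^{\surd}(x_j)$ to $\overline{x}_i\otimes\overline{x}_j$), and then keep the index relabelling and the resulting sign straight. Note in particular that the binomial coefficients $\binom{a_i}{2}$ and the cross terms $a_ia_j$ produced by Lemma~\ref{tricklemma} enter only through summands that vanish in $F_2/F_3$, so the polynomial subtleties of $p_2^{\surd}$ play no role in this particular cocycle; everything else is a formal unwinding of the maps constructed in Section~\ref{three}.
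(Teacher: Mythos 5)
Your proposal is correct and follows essentially the same route as the paper: the same chain $\Coker c^*\cong\Coker\mu_2^*\hookrightarrow\Coker\overline{\mu}^*$, the same basis-dependent retraction $r$ of $l_2$, and the same observation that all terms of the Lemma~\ref{tricklemma} expansion except $\sum_i a_ip_2^{\surd}(x_i)$ die in $F_2/F_3$, after which the sign and index relabelling give the stated formula. No gaps.
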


Now take $f \in \Hom(L_1 \otimes L_2 \big/ S, M)$. To find the corresponding 2-cocycle, we need splittings $s_1$ and $s_2$ and retractions $r_1$ and $r_2$ for the respective exact sequences 
\[\xymatrix{0 \ar[r] & F_3 / F_4 \ar[r] & F_2 / F_4 \ar[r] & F_2 / F_3 \ar[r] & 0}\]
and \[\xymatrix{0 \ar[r] & L_1 \otimes L_2 \big/ S \ar[r]^-{\omega} & F_3 / F_4 \ar[r] & U_3 \ar[r] & 0},\] to get a splitting $(r_2 \circ r_1)^*$ of $\Hom_{\Z}(F_2 / F_4,M) \to \Hom_Z (L_1 \otimes L_2 \big/ S, M)$ that induces a splitting of the sequence
\[\xymatrix{0 \ar[r] &  \Coker c^* \ar[r] & \Coker \overline{\mu}^* \ar[r] & \Hom_{\Z}(L_1 \otimes L_2 \big/ S,M) \ar[r] & 0.}\]  
% % % % % % % % % % %
It follows from Section \ref{three} that there is a ring isomorphism $L_1 \oplus U_2 \cong \polytwo (G)$, mapping $\overline{x}_i$ to $p_2^{\surd}(x_i)$ and $\overline{y}_j$ to $p_2^{\surd}(y_j)$. The Poincar\'e-Bickhoff-Witt decomposition provides a basis of $U_2$ consisting of the elements $\overline{x}_i \overline{x}_j$ with $1 \leq i \leq j \leq n$ and the elements $\overline{y}_j$ with $1 \leq j \leq m$. This implies that $\polytwo (G)$ has a basis consisting of the elements $p_2^{\surd}(x_i)$ for $1 \leq i \leq n$, $p_2^{\surd}(x_i)p_2^{\surd}(x_j)$ for $1 \leq i \leq j \leq n$ and $p_2^{\surd}(y_j)$ for $1 \leq j \leq m$. 
% % % % % % % % % % % % %
As a consequence, $\ptensor \big/ F_4$ is generated by the classes of the elements $p_2^{\surd}(x_i) \otimes p_2^{\surd}(x_j)$ with $1 \leq i, \, j \leq n$, $p_2^{\surd}(x_i) \otimes p_2^{\surd}(y_j)$ and $p_2^{\surd}(y_j) \otimes p_2^{\surd}(x_i)$ with $1 \leq i \leq n$ and $1 \leq j \leq m$, and the elements $p_2^{\surd}(x_i) \otimes p_2^{\surd}(x_j)p_2^{\surd}(x_k)$ with $1 \leq i, \, j , \, l \leq n$ and $i \leq j$ or $j \leq k$. In the sequel, we will not distinguish between these elements of $\ptensor$ and their classes in $\ptensor \big/ F_4$. 
% % % % % % % % % % % % % %
It is clear that the images of the elements $p_2^{\surd}(x_i) \otimes p_2^{\surd}(x_j)$ under the projection to $F_2 / F_3$ form a basis of $F_2 / F_3$, and that the other generating elements form a generating set of $F_3 / F_4$.
%first retraction
Now we can choose the first retraction $r_1$ to be trivial on the generating elements of the form $p_2^{\surd}(x_i) \otimes p_2^{\surd}(x_j)$, while being the identity on the other generating elements. 

%second retraction
To choose a splitting of the second sequence, we use the Poincar\'e-Birkhoff-Witt theorem to find a basis of $U_3$ consisting of the elements $\overline{x}_i \overline{x}_j \overline{x}_k$ with $i \leq j \leq k$ and $\overline{x}_i \overline{y}_j$ with $1 \leq i \leq n$ and $1 \leq j \leq m$. We choose a splitting $s_2$ mapping $\overline{x}_i \overline{x}_j \overline{x}_k$ to $p_2^{\surd}(x_i) \otimes p_2^{\surd}(x_j)p_2^{\surd}(x_k)$ and $\overline{x}_i \overline{y}_j$ to $p_2^{\surd}(x_i) \otimes p_2^{\surd}(y_j)$. 
We can now prove the following lemma.

\begin{lemma}\label{retraction}
The corresponding retraction $r_2: F_3 / F_4 \to L_1 \otimes L_2 \big/ S$ maps
\begin{itemize}
\item elements of the form $p_2^{\surd}(x_i) \otimes p_2^{\surd}(x_j)p_2^{\surd}(x_k)$ with $i \leq j \leq k$ to zero;
\item elements of the form $p_2^{\surd}(x_i) \otimes p_2^{\surd}(y_j)$ to zero;
\item elements of the form $p_2^{\surd}(x_i) \otimes p_2^{\surd}[x_j, x_k]$ to zero;
\item elements of the form $p_2^{\surd}(y_j) \otimes p_2^{\surd}(x_i)$ to the coset of $-\overline{x}_i \otimes \overline{y}_j$;
\item elements of the form $p_2^{\surd}[x_j,x_k] \otimes p_2^{\surd}(x_i)$ to the coset of $-\overline{x}_i \otimes c(\overline{x}_j \wedge \overline{x}_k)$.
\end{itemize}
\end{lemma}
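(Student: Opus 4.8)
The plan is to unwind the definition of the retraction $r_2$ coming from the splitting $s_2$ and then read off its value on each of the five families by a direct computation. Recall that for the split short exact sequence $0\to L_1\otimes L_2/S\xrightarrow{\omega}F_3/F_4\xrightarrow{\mu_3}\paug^3 G/\paug^4 G\to 0$ of Proposition~\ref{third}, in which $\paug^3 G/\paug^4 G$ is identified with $U_3$ via the isomorphism $\theta_3$, the retraction determined by $s_2$ is characterised by $\omega\circ r_2=\mathrm{id}_{F_3/F_4}-s_2\circ\theta_3^{-1}\circ\mu_3$. Since $\omega$ is injective, computing $r_2(b)$ amounts to computing $b-s_2(\theta_3^{-1}\mu_3(b))$ and recognising it as $\omega$ of the element asserted in the statement. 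First I would assemble three ingredients: \textbf{(i)} from the computations in the proof of Proposition~\ref{third}, $\mu_3$ carries the class of $p_2^{\surd}(a)\otimes p_2^{\surd}(b)$ to $\theta_3\bigl(\mu_U(\overline{a}\otimes\overline{b})\bigr)$, where $\overline{a},\overline{b}$ are the relevant graded classes and $\mu_U$ is the product of $U(L^{\surd}G)$; \textbf{(ii)} since $G$ has class $\le 2$, $L_2$ is central in $L^{\surd}G$, so $\overline{z}\,\overline{x}=\overline{x}\,\overline{z}$ in $U(L^{\surd}G)$ whenever $\overline{z}\in L_2$, and — combining this with Lemma~\ref{tricklemma} and the definition of $s_2$ on the Poincar\'e--Birkhoff--Witt basis vectors $\overline{x}_i\overline{y}_l$ — one gets $s_2(\overline{x}_i\overline{z})=p_2^{\surd}(x_i)\otimes p_2^{\surd}(z)$ for every $z\in\sqrt{\gamma_2(G)}$; \textbf{(iii)} from Proposition~\ref{third}, $\omega$ sends $\overline{x}\otimes\overline{z}\in L_1\otimes L_2$ to the class of $p_2^{\surd}(x)\otimes p_2^{\surd}(z)-p_2^{\surd}(z)\otimes p_2^{\surd}(x)$ in $F_3/F_4$ (it is the composite $\widetilde u\circ l_3$).

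With these in place I would dispatch the five cases. Bullets one and two are immediate: the listed elements are exactly $s_2$ applied to the Poincar\'e--Birkhoff--Witt basis vectors $\overline{x}_i\overline{x}_j\overline{x}_k$ (with $i\le j\le k$) and $\overline{x}_i\overline{y}_j$ of $U_3$, hence lie in the image of $s_2$ and are killed by $r_2$. For bullet three, ingredient \textbf{(i)} gives $\theta_3^{-1}\mu_3\bigl(p_2^{\surd}(x_i)\otimes p_2^{\surd}[x_j,x_k]\bigr)=\overline{x}_i\,c(\overline{x}_j\wedge\overline{x}_k)$, because $[x_j,x_k]\in\sqrt{\gamma_2(G)}$ has class $c(\overline{x}_j\wedge\overline{x}_k)$ in $L_2$; by ingredient \textbf{(ii)} the map $s_2$ then returns the element we started with, so again $r_2=0$. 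For bullets four and five, ingredient \textbf{(i)} together with centrality of $L_2$ gives $\theta_3^{-1}\mu_3\bigl(p_2^{\surd}(z)\otimes p_2^{\surd}(x_i)\bigr)=\overline{x}_i\,\overline{z}$, with $z=y_j$ resp.\ $z=[x_j,x_k]$; hence $b-s_2(\theta_3^{-1}\mu_3(b))=p_2^{\surd}(z)\otimes p_2^{\surd}(x_i)-p_2^{\surd}(x_i)\otimes p_2^{\surd}(z)$, which by ingredient \textbf{(iii)} equals $\omega(-\overline{x}_i\otimes\overline{z})$, and injectivity of $\omega$ yields $r_2(b)=-\overline{x}_i\otimes\overline{z}$ modulo $S$ — that is, the coset of $-\overline{x}_i\otimes\overline{y}_j$, resp.\ of $-\overline{x}_i\otimes c(\overline{x}_j\wedge\overline{x}_k)$.

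I expect the only real difficulty to lie in the bookkeeping: keeping the maps $\widetilde u$, $\overline q_3$, $\theta_k$, $\mu_3$, $\omega$ and the various splittings straight, so that ingredients \textbf{(i)} and \textbf{(iii)} are correctly extracted from Lemmas~\ref{quotients}, \ref{approximations} and \ref{better_approximation} and from Proposition~\ref{third}. The single point that genuinely needs care is bullet three (and the commutator half of bullet five): one must check that $\overline{x}_i\,c(\overline{x}_j\wedge\overline{x}_k)$, once $c(\overline{x}_j\wedge\overline{x}_k)$ is expanded in the basis $\overline{y}_1,\dots,\overline{y}_m$ of $L_2$, already lies in the span of the chosen Poincar\'e--Birkhoff--Witt basis vectors $\overline{x}_i\overline{y}_l$ of $U_3$, so that $s_2$ may be applied coordinatewise and — using the additivity $p_2^{\surd}(\prod_l y_l^{e_l})=\sum_l e_l\,p_2^{\surd}(y_l)$ from Lemma~\ref{tricklemma} — reproduces $p_2^{\surd}(x_i)\otimes p_2^{\surd}[x_j,x_k]$ on the nose. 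Everything else is routine linear algebra over $\Z$.
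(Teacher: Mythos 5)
Your argument is correct and follows essentially the same route as the paper: both unwind $r_2=\omega^{-1}\circ(\mathbb{1}-s_2\circ\theta_3^{-1}\circ\mu_3)$, compute $\theta_3^{-1}\circ\mu_3$ on the listed generators, and reduce the two commutator bullets to the $y_l$-cases via the expansion $p_2^{\surd}[x_j,x_k]=\sum_l d_l\,p_2^{\surd}(y_l)$ together with the identification $\omega=\widetilde u\circ l_3$. The only cosmetic difference is that you invoke centrality of $L_2$ explicitly where the paper leaves it implicit.
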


\begin{proof}
Observe that the map $\theta_3^{-1} \circ \mu_3 : F_3 / F_4 \to U_3$ sends $p_2^{\surd}(x_i) \otimes p_2^{\surd}(x_j)p_2^{\surd}(x_k)$ to $\overline{x}_i \overline{x}_j \overline{x}_k$, $p_2^{\surd}(x_i) \otimes p_2^{\surd}(y_j)$ to $\overline{x}_i \overline{y}_j$ and $p_2^{\surd}(y_j) \otimes p_2^{\surd}(x_i)$ to $\overline{y}_j \overline{x}_i$. Since $r_2$ is defined as $\omega^{-1} \circ (\mathbb{1}-s_2 \circ (\theta_3^{-1} \circ \mu_3))$, the first, second and fourth statement now follow directly from the definition of $s_2$. To prove the other statements, write $[x_j,x_k]$ in terms of the Mal'cev basis of $G$. By definition of the Mal'cev basis, this will be a word of the form $[x_j,x_k]=y_1^{d_1} \ldots y_m^{d_m}$. Using the same methods as in the proof of Lemma \ref{tricklemma}, we see that $p_2^{\surd}[x_j,x_k]=\sum_{l=1}^m d_l p_2^{\surd}(y_l)$. As a consequence, $p_2^{\surd}(x_i) \otimes p_2^{\surd}[x_j, x_k]=\sum_{l=1}^m d_l \,  p_2^{\surd}(x_i) \otimes p_2^{\surd}(y_l)$ and $p_2^{\surd}[x_j,x_k] \otimes p_2^{\surd}(x_i)=\sum_{l=1}^m d_l \, p_2^{\surd}(y_l) \otimes p_2^{\surd}(x_i)$. The third statement now follows from the second. The fourth statement implies that $r_2$ maps $p_2^{\surd}[x_j,x_k] \otimes p_2^{\surd}(x_i)$ to $-  \sum_{l=1}^m d_l \, \overline{x}_i \otimes \overline{y}_l$. Since $[x_j,x_k]=y_1^{d_1} \ldots y_m^{d_m}$ in $G$, it follows that $\sum_{l=1}^m d_l \, \overline{y}_l=c(\overline{x}_j \wedge \overline{x}_k)$ in $L_2$, and the fifth statement follows. 
\end{proof}
This motivates us to write the generating elements $p_2^{\surd}(x_i) \otimes p_2^{\surd}(x_j)p_2^{\surd}(x_k)$ with $i \leq j$ or $j \leq k$ in terms of the elements in the previous lemma. Therefore, observe that $[a,b]-1=aba^{-1} b^{-1} -1 = (ab-ba)a^{-1}b^{-1}$ for $a, \, b \in G$. We simply compute that $ab-ba = (a-1)(b-1) - (b-1)(a-1) \in I_{\surd}^2(G)$. It follows that $p_2^{\surd}[a,b]=p_2^{\surd}(a)p_2^{\surd}(b)-p_2^{\surd}(b)p_2^{\surd}(a)$ in $\ptwo (G)$. 
% % % % % % % % % % % % % % % % % % % % % %
This means that we can use commutators to change the order of two factors $p_2^{\surd}(a)$ and $p_2^{\surd}(b)$, and this gives a way to write a generating element $p_2^{\surd}(x_i) \otimes p_2^{\surd}(x_j)p_2^{\surd}(x_k)$ with either $i \leq j$ or $j \leq k$ as a linear combination of elements in the previous lemma. 
% % % % % % % % % % % % % % % % % % % % % %
Moreover, since we only want to know the image of the generating elements under the retraction $r_2$, we can forget about the terms $p_2^{\surd}(x_i) \otimes p_2^{\surd}[x_j,x_k]$. This gives us the following relations in $\ptwo (G)$, where the dots stand for one or more terms that don't contribute to the image under $r_2$.
\begin{itemize}
\item $p_2^{\surd}(x_i) \otimes p_2^{\surd}(x_j)p_2^{\surd}(x_k) = p_2^{\surd}(x_i) \otimes p_2^{\surd}(x_k)p_2^{\surd}(x_j) + \ldots$ ;
\item $p_2^{\surd}(x_i) \otimes p_2^{\surd}(x_j)p_2^{\surd}(x_k)=p_2^{\surd}(x_k) \otimes p_2^{\surd}(x_i)p_2^{\surd}(x_j) + p_2^{\surd}[x_i, x_k] \otimes p_2^{\surd}(x_j) + \ldots$;
\item $p_2^{\surd}(x_i) \otimes p_2^{\surd}(x_j)p_2^{\surd}(x_k) = p_2^{\surd}(x_j) \otimes p_2^{\surd}(x_i)p_2^{\surd}(x_k) + p_2^{\surd}[x_i,x_j] \otimes p_2^{\surd}(x_k)$;
\item $p_2^{\surd}(x_i) \otimes p_2^{\surd}(x_j)p_2^{\surd}(x_k) = p_2^{\surd}(x_j) \otimes p_2^{\surd}(x_k)p_2^{\surd}(x_i) + p_2[x_i,x_j] \otimes p_2^{\surd}(x_k) + \ldots$
\end{itemize}
By Lemma \ref{tricklemma}, we can write $p_2^{\surd}(g) \otimes p_2^{\surd}(g')$ in terms of the generating elements of $\ptensor \big/ F_4$. We want to write all the terms that have non-zero image under $r_2 \circ r_1$. Remember that $r_1$ ``forgets'' about all the terms $p_2^{\surd}(x_i) \otimes p_2^{\surd}(x_j)$. We can use Lemma \ref{retraction} together with the list of relations above to get 
\[p_2^{\surd}(g) \otimes p_2^{\surd}(g')=  \sum_{i > j} \binom{a_i}{2} a_j ' p_2^{\surd}[x_i,x_j] \otimes p_2^{\surd}(x_i) + \sum_{i > j} a_i \binom{a_j'}{2} p_2^{\surd}[x_i,x_j] \otimes p_2^{\surd}(x_j)\]
\[ + \sum_{k < i < j} a_i a_j a_k' p_2^{\surd}[x_i,x_k] \otimes p_2^{\surd}(x_j) + \sum_{j < i < k} a_i a_j'a_k' p_2^{\surd}[x_i,x_j] \otimes p_2^{\surd}(x_k)\]\[
 + \sum_{j < k \leq i} a_i a_j'a_k' p_2^{\surd}[x_i,x_j] \otimes p_2^{\surd}(x_k) + \sum_{i, \, j} a_i'b_j p_2^{\surd}(y_j) \otimes p_2^{\surd}(x_i) + \ldots,\]
where the dots stand for terms that don't contribute to the image under $r_2 \circ r_1$. The following theorem is now a consequence of Lemma \ref{retraction}. 
\begin{theorem}\label{lemmay}
An element $f \in \Hom (L_1 \otimes L_2 \big/ S, M)$ gives rise to the class of the  cocycle mapping $(g, g')$ to 
\[- \sum_{i > j} \binom{a_i}{2} a_j ' f(\overline{x}_i \otimes c(\overline{x}_i \wedge \overline{x}_j)- \sum_{i > j} a_i \binom{a_j'}{2} f(\overline{x}_j \otimes c(\overline{x}_i \wedge \overline{x}_j))\]
\[ - \sum_{k < i < j} a_i a_j a_k' f(\overline{x}_j \otimes c(\overline{x}_i \wedge \overline{x}_k)) - \sum_{j < i < k} a_i a_j'a_k' f(\overline{x}_k \otimes c(\overline{x}_i \wedge \overline{x}_j))\]\[
 - \sum_{j < k \leq i} a_i a_j'a_k' f(\overline{x}_k \otimes c(\overline{x}_i \wedge \overline{x}_j)) - \sum_{i, \, j} a_i'b_j f(\overline{x}_i \otimes \overline{y}_j),\] where $g$ and $g'$ are expressed as words in the Mal'cev base of $G$ as before.
\end{theorem}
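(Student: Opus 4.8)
The plan is to make the inclusion
\[
\Hom_\Z(L_1\otimes L_2/S,\,M)\;\hookrightarrow\;\Coker\overline{\mu}^*\;\cong\;H^2(G,M),\qquad f\longmapsto[\,f\circ r_2\circ r_1\,],
\]
completely explicit, where $r_1:F_2/F_4\to F_3/F_4$ and $r_2:F_3/F_4\to L_1\otimes L_2/S$ are the retractions fixed above (Lemma~\ref{retraction}). By the discussion preceding the statement, a class in $\Coker\overline{\mu}^*$ represented by $h:F_2/F_4\to M$ is carried, under the isomorphism of Theorem~\ref{general_formula}, to the cocycle $(g,g')\mapsto h\bigl(p_2^{\surd}(g)\otimes p_2^{\surd}(g')+F_4\bigr)$; so, taking $h=f\circ r_2\circ r_1$, everything reduces to computing $r_2\bigl(r_1(p_2^{\surd}(g)\otimes p_2^{\surd}(g')+F_4)\bigr)$ when $g$ and $g'$ are written as words in the Mal'cev basis.

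First I would substitute the expansions of $p_2^{\surd}(g)$ and $p_2^{\surd}(g')$ furnished by Lemma~\ref{tricklemma} and multiply out the tensor product $\Z$-bilinearly. Every summand that is a product of two factors of augmentation degree $\ge2$ lies in $F_4$ and hence dies after applying $\Hom_\Z(-,M)$, since $F_4$ is torsion (Lemma~\ref{torsion}) and $M$ is torsion-free; the bidegree-$(1,1)$ summands $p_2^{\surd}(x_i)\otimes p_2^{\surd}(x_j)$ are killed by $r_1$. What survives is a finite sum, with coefficients polynomial in the $a_i,a_i',b_j$, of terms of bidegree $(1,2)$ or $(2,1)$: the families $p_2^{\surd}(x_i)\otimes p_2^{\surd}(y_j)$, $p_2^{\surd}(y_j)\otimes p_2^{\surd}(x_i)$, $p_2^{\surd}(x_i)\otimes p_2^{\surd}(x_j)^2$, $p_2^{\surd}(x_i)^2\otimes p_2^{\surd}(x_j)$, $p_2^{\surd}(x_i)\otimes p_2^{\surd}(x_j)p_2^{\surd}(x_k)$ with $j<k$, and $p_2^{\surd}(x_i)p_2^{\surd}(x_j)\otimes p_2^{\surd}(x_k)$ with $i<j$.

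The core of the computation --- and the step I expect to be the main obstacle --- is to rewrite each of these, modulo $F_4$, as a $\Z$-linear combination of the five standard generators on which $r_2$ was described in Lemma~\ref{retraction}. Two manipulations suffice, both legitimate in $F_3/F_4$: inside a single tensor factor one uses $p_2^{\surd}(a)p_2^{\surd}(b)=p_2^{\surd}(b)p_2^{\surd}(a)+p_2^{\surd}[a,b]$, which follows from $ab-ba=(a-1)(b-1)-(b-1)(a-1)\in I_{\surd}^2G$; and one slides a product across the tensor sign using $x\otimes\mu_U(y,z)=\mu_U(x,y)\otimes z$ in $F_3/F_4$, which is the relation $\overline{q}_3\circ\overline{\gamma}=0$ of Lemma~\ref{approximations}. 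Carrying this out forces a case distinction on the relative order of the indices $i,j,k$ --- exactly the four rewriting relations displayed just before the statement --- and in each case every summand collapses to $0$ except a single term of the form $p_2^{\surd}[x_p,x_q]\otimes p_2^{\surd}(x_r)$ or $p_2^{\surd}(y_j)\otimes p_2^{\surd}(x_i)$; the only delicate bookkeeping is keeping the index ranges straight (for instance, a constraint such as $j<i\le k$ splits into the two cases $j<i<k$ and $j<k\le i$, with the same commutator term in both). Applying Lemma~\ref{retraction}, which sends $p_2^{\surd}[x_p,x_q]\otimes p_2^{\surd}(x_r)$ to $-\overline{x}_r\otimes c(\overline{x}_p\wedge\overline{x}_q)$ and $p_2^{\surd}(y_j)\otimes p_2^{\surd}(x_i)$ to $-\overline{x}_i\otimes\overline{y}_j$, and then $f$, assembles precisely the six sums in the statement. (In effect the reduction just outlined produces the identity for $p_2^{\surd}(g)\otimes p_2^{\surd}(g')$ modulo $\Ker(r_2\circ r_1)$ already recorded in the paragraph preceding the statement, so the formal write-up can simply invoke that identity.) No separate cocycle verification is needed, since the resulting function arises from a $\Hom$-class through the bar-resolution map of Theorem~\ref{general_formula}.
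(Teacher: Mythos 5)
Your proposal is correct and follows essentially the same route as the paper: expand $p_2^{\surd}(g)\otimes p_2^{\surd}(g')$ via Lemma~\ref{tricklemma}, discard the $F_4$-terms (torsion) and the terms killed by $r_1$, rewrite the surviving bidegree-$(1,2)$ and $(2,1)$ generators using the commutator identity $p_2^{\surd}(a)p_2^{\surd}(b)=p_2^{\surd}(b)p_2^{\surd}(a)+p_2^{\surd}[a,b]$ together with sliding factors across $\otimes_G$, and then apply the retraction of Lemma~\ref{retraction} followed by $f$. The only quibble is your illustrative aside about the index sets (the two sums $j<i<k$ and $j<k\le i$ together cover the condition $j<\min(i,k)$, not $j<i\le k$), but this does not affect the argument.
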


\end{document}